\newtheorem{theorem}{Theorem}[section]
\newtheorem{definition}[theorem]{Definition}
\newtheorem{lemma}[theorem]{Lemma}
\newtheorem{proposition}[theorem]{Proposition}
\newtheorem{corollary}[theorem]{Corollary}
\newcommand{\proof}{\noindent{\bf Proof.\ }}
\newcommand{\qed}{\hfill $\square$ \bigskip}
\newcommand{\cp}{\,\square\,}
\newcommand{\diam}{{\rm diam}}
\newcommand{\gp}{{\rm gp}}
\newcommand{\frog}{{frog\ }}
\newcommand{\mvs}{mutual-visibility set\xspace}
\begin{document}

\title{On the mutual visibility in \\ Cartesian products and triangle-free graphs}

\author{
Serafino Cicerone $^{a}$\thanks{Email: \texttt{serafino.cicerone@univaq.it}}
\and 
Gabriele Di Stefano $^{a}$\thanks{Email: \texttt{gabriele.distefano@univaq.it}} 
\and Sandi Klav\v zar $^{b,c,d}$\thanks{Email: \texttt{sandi.klavzar@fmf.uni-lj.si}}
}
\maketitle

\begin{center}
	$^a$ Department of Information Engineering, Computer Science, and Mathematics, 
	     University of L'Aquila, Italy \\
	\medskip

	$^b$ Faculty of Mathematics and Physics, University of Ljubljana, Slovenia\\
	\medskip

	$^c$ Institute of Mathematics, Physics and Mechanics, Ljubljana, Slovenia\\
	\medskip
	
	$^d$ Faculty of Natural Sciences and Mathematics, University of Maribor, Slovenia\\
	\medskip
\end{center}

\begin{abstract}
Given a graph $G=(V(G), E(G))$ and a set $P\subseteq V(G)$, the following concepts have been recently introduced: $(i)$ two elements of $P$ are \emph{mutually visible} if there is a shortest path between them without further elements of $P$;  $(ii)$ $P$ is a \emph{mutual-visibility set} if its elements are pairwise mutually visible;  $(iii)$ the \emph{mutual-visibility number} of $G$ is the size of any largest mutual-visibility set. 
In this work we continue to investigate about these concepts. We first focus on mutual-visibility in Cartesian products. For this purpose, too, we introduce and investigate independent mutual-visibility sets. In the very special case of the Cartesian product of two complete graphs the problem is shown to be equivalent to the well-known Zarenkiewicz's problem. We also characterize the triangle-free graphs with the mutual-visibility number equal to $3$. 
\end{abstract}


\noindent
{\bf Keywords:} mutual-visibility set; mutual-visibility number; independent mutual-visibility set; Cartesian product of graphs; Zarenkiewicz's problem; triangle-free graph \\

\noindent
AMS Subj.\ Class.\ (2020):  05C12, 05C38, 05C69, 05C76

\section{Introduction}
Let $G = (V(G), E(G))$ be a connected, undirected graph and $X\subseteq V(G)$ a subset of the vertices of $G$. Two vertices of $X$ are \emph{mutually visible} if there exists a shortest path (also called {\em geodesic}) between them without further vertices in $X$. $X$ is a \emph{mutual-visibility set} if its vertices are pairwise mutually visible. If $X\subseteq V(G)$ and $x,y\in X$, then we say that $x$ and $y$ are {\em $X$-visible}, if there exists a shortest $x,y$-path $P$ such that $V(P)\cap X = \{x,y\}$. The size of a largest \mvs is the \emph{mutual-visibility number} of $G$, and it is denoted by $\mu(G)$.

In~\cite{DiStefano-2021+}, the author started the study about this invariant and the {\mvs}s in some classes of graphs, after showing that the problem of finding a \mvs with a size larger than a given number is  NP-complete. 
%
%
There are different motivations for addressing this problem. The first comes from the role that mutual visibility plays in problems arising in the context of distributed computing by mobile entities. Another derives from communication problems in computer/social networks, where vertices of a network in mutual visibility may represent entities that want to efficiently send data in such a way that the exchanged messages do not pass through other entities.

\paragraph{Related work.}
The past two decades have seen rapid growth and development of the field of distributed computing by mobile entities~\cite{flocchini-2012}, whose aim is the study of the computational and complexity issues arising in systems of decentralized computational entities operating either in the Euclidean plane or in some discrete environment (e.g., a graph). In both settings, the research concern is on determining what tasks can be performed by the entities, under what conditions, and at what cost.  One of such basic task is called \textsc{Mutual Visibility} and has been introduced in~\cite{DiLuna17} under the assumption that three collinear entities are not mutually visible: given an arbitrary initial configuration of $n$ entities located at distinct positions, they autonomously arrange themselves in a configuration in which each entity is in a distinct position and from which it can see all other entities. 

%
%
Starting with this initial work, many papers have addressed the same problem (e.g., see~\cite{Aljohani18a,Bhagat20,Poudel2021}). Later, similar visibility problems were considered in different contexts, where the entities are ``fat entities'' modeled as disks in the Euclidean plane (e.g., see~\cite{Poudel19}) or are points on a grid based terrain and their movements are restricted only along grid lines (e.g., see~\cite{Adhikary18}).

However, questions about sets of points and their mutual visibility in the Euclidean plane have been investigated since the end of XIX century. In~\cite{dudeney917} Dudeney posed the still open no-three-in-line problem:  find the maximum number of points that
can be placed in an $n\times n$  grid so that no three points lie on a line. 
Motivated by the Dudeney's problem, in~\cite{manuel:18} the {\sc General Position} problem in graphs was introduced. A couple of years earlier and in a different language, an equivalent problem was posed in~\cite{chand:16}. A subset $S$ of vertices in a graph $G$ is a \emph{general position set} if no triple of vertices from $S$ lie on a common geodesic in $G$. The {\sc General Position} problem is to find a largest general position set of $G$, the order of such a set is the \emph{general position number} $\gp(G)$ of $G$. Since its introduction, the general position number has been studied for several graph classes (e.g., grid networks~\cite{manuel:18b}, cographs and bipartite graphs~\cite{acckt:19},  graph classes with large general position number~\cite{tc:20}, Cartesian products of graphs~\cite{kpry:21, klavzar-2021b, tian-2021a, tian-2021b}, and Kneser graphs~\cite{ghorbani-2021, patkos-2020}).

The difference between a general position set $S$ and a \mvs $X$ is that two vertices are in $X$ if there \emph{exists} a shortest path between them with no further vertex in $X$, whereas two vertices are in $S$ if for \emph{every} shortest path between them no further vertex is in $S$. The two concepts are intrinsically different, but also closely related, since the vertices of a general position set are in mutual visibility.

\paragraph{Outline and results.}
In the rest of the introduction, we provide basic definitions and notation. In the next section, independent mutual-visibility sets are introduced and a couple of related results proved. We use these sets in Section~\ref{sec:cp} to bound the mutual-visibilty number of Cartesian product graphs. Then we prove that if a maximum mutual-visibility set of a graph $G$ is independent, then $\mu(K_k\cp G) = k\cdot \mu(G)$. For the very special case of $\mu(K_m \cp K_n)$ we show that its computation is equivalent to a special case of Zarenkiewicz's problem, which is a notorious open problem.  The main result is given in Section~\ref{sec:triangle-free}, where we prove that if $G$ is a connected, triangle-free graph, then $\mu(G)=3$ if and only if $G$ is a tree with three leaves or a so-called \frog graph.

\paragraph{Basic definitions.}
Since two vertices from different components of a graph are not mutually visible, all graphs in the paper are connected unless stated otherwise. 

For a natural number $n$, we set $[n] = \{1,\ldots, n\}$. Given a graph $G$, $V(G)$ and $E(G)$ are used to denote its vertex set and its edge set, respectively. The order of $G$, that is $|V(G)|$, is denoted by $n(G)$. The distance function $d_G$ on a graph $G$ is the usual shortest-path distance. The {\em diameter} $\diam(G)$ of $G$ is the maximum distance between pairs of vertices of the graph.

If $X\subseteq V(G)$, then $G[X]$ denotes the subgraph of $G$ induced by $X$. We denote by $P_k$ any induced path with $k\ge 1$ vertices, by $C_k$ any chordless cycle with $k\ge 3$ vertices, and by $K_k$ the clique with $k\ge 1$ vertices. Given a cycle $C$, two vertices $u,v$ of $C$ are \emph{antipodal} if $d_C(u,v)=\left\lfloor\frac{n(C)}{2}\right\rfloor$.

An independent set is a set of vertices of $G$, no two of which are adjacent. The cardinality of a largest independent set is the {\em independence number} $\alpha(G)$ of $G$. 

A subgraph $G'$ of a graph $G$ is \emph{isometric}, if for every two vertices of $G'$, the distance between them in $G'$ equals the distance in $G$. The subgraph $G'$ is \emph{convex}, if for every two vertices of $G'$, every shortest path in $G$ between them lies completely in $G'$. Clearly, each convex subgraph is isometric.

\section{Independent mutual-visibility number}
\label{sec:mu-i}

In this section we introduce independent mutual-visibility sets. They will turn out to be useful in the next section, but we believe that they may also be of independent interest. 

Let $X$ be a mutual-visibility set of a graph $G$. If $G[X]$ is edgeless, then we say that $X$ is an {\em independent mutual-visibility set}. The {\em independent mutual-visibility number} $\mu_i(G)$ of $G$ is the size of a largest independent mutual-visibility set. Clearly, if $G$ is an arbitrary graph, then 
\begin{equation}
\label{eq:upper-mu-alpha}
\mu_i(G) \le  \min \{\mu(G), \alpha(G)\}\,.
\end{equation}
If $G$ has small diameter, then we have the equality, more precisely, the following holds.  

\begin{lemma}
\label{lem-mu-i-diam-3}
If $\diam(G)\le 3$, then $\mu_i(G) = \alpha(G)$. 
\end{lemma}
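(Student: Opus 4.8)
The plan is to prove the two inequalities separately. The inequality $\mu_i(G)\le\alpha(G)$ is immediate from~\eqref{eq:upper-mu-alpha}, so the real content is the reverse inequality $\mu_i(G)\ge\alpha(G)$. For this I would take an arbitrary maximum independent set $S$ of $G$ (so that $|S|=\alpha(G)$) and argue that $S$ is automatically a mutual-visibility set. Since $S$ is edgeless, this makes $S$ an independent mutual-visibility set, and hence $\mu_i(G)\ge|S|=\alpha(G)$, which together with the first inequality yields the claimed equality.

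To see that $S$ is a mutual-visibility set, fix two distinct vertices $x,y\in S$. Because $S$ is independent, $x$ and $y$ are non-adjacent, so $d_G(x,y)\ge 2$; because $\diam(G)\le 3$, we get $d_G(x,y)\in\{2,3\}$. Let $P\colon x=v_0,v_1,\dots,v_k=y$ be an arbitrary geodesic between them, where $k\in\{2,3\}$. The key observation is that every internal vertex of $P$ is adjacent to an endpoint of $P$: if $k=2$ then $v_1$ is adjacent to both $x$ and $y$, while if $k=3$ then $v_1$ is adjacent to $x$ and $v_2$ is adjacent to $y$. Since $x,y\in S$ and $S$ is independent, no neighbor of $x$ and no neighbor of $y$ can lie in $S$; consequently no internal vertex of $P$ lies in $S$, i.e.\ $V(P)\cap S=\{x,y\}$, so $x$ and $y$ are $S$-visible.

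As $x$ and $y$ were arbitrary, $S$ is a mutual-visibility set, which completes the argument. Note that this in fact proves slightly more than the statement asks: when $\diam(G)\le 3$, \emph{every} independent set of $G$ is a mutual-visibility set. I do not expect any genuine obstacle here; the only point needing (a trivial amount of) care is the case $d_G(x,y)=3$, where one must observe that the two internal vertices of a geodesic are accounted for by being adjacent to $x$ and to $y$, respectively.
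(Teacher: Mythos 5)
Your proposal is correct and follows essentially the same argument as the paper: take an independent set, note that independence forces any geodesic between two of its vertices to have length $2$ or $3$ and to contain no further vertices of the set, and conclude via~\eqref{eq:upper-mu-alpha}. The only cosmetic difference is that the paper treats $\diam(G)=1$ as a separate (trivial) case, while your version absorbs it vacuously.
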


\proof
If $\diam(G) = 1$, then $G$ is a complete graph and hence $\mu_i(G) = 1 = \alpha(G)$. Suppose next that $\diam(G)\in \{2,3\}$ and let $X$ be an arbitrary independent set of $G$. We claim that $X$ is also an independent mutual-visibility set. Let $x, y$ be arbitrary vertices from $X$ and let $P$ be a shortest $x,y$-path. Since $X$ is an independent set, $P$ is of length $2$ or $3$. But then, again using the fact that $X$ is independent, $V(P)\cap X = \{x,y\}$. This means that $x$ and $y$ are $X$-visible and consequently $\mu_i(G) \ge \alpha(G)$. The argument is completed by applying~\eqref{eq:upper-mu-alpha}. 
\qed

Lemma~\ref{lem-mu-i-diam-3} need not hold for graphs $G$ with $\diam(G)\ge 4$. For instance, $\mu_i(P_5) = 2$ and $\alpha(P_5) = 3$. 

In view of Lemma~\ref{lem-mu-i-diam-3} we are interested in the graphs $G$ for which $\mu(G) = \mu_i(G)$ holds. Additional reasons for this interest will be given in Section~\ref{sec:cp}. 

As the first example note that $\mu(C_k) = 3 =  \mu_i(C_k)$ holds for each $k\ge 6$. From~\cite[Corollary 4.3]{DiStefano-2021+} we know that if $L$ is the set of leaves of a tree $T$, then $L$ is a mutual-visibility set and $\mu(T) = |L|$. Hence, if $n(T)\ge 3$, then 
\begin{equation}
\label{eq:trees}
\mu(T) = |L| = \mu_i(T)\,.
\end{equation}

The {\em corona} $G\circ H$ of disjoint graphs $G$ and $H$ was introduced in~\cite{Frucht-1970} as the graph obtained from the disjoint union of $G$ and $n(G)$  copies of $H$ by joining the $i^{\rm th}$ vertex of $G$ to every vertex in the $i^{\rm th}$ copy of $H$. This graph operation has so far been explored from many aspects, see~\cite{Dong-2021, Furmanczyk-2021, Kuziak-2017}.  Here we add the following result which in particular yields a large class of graphs with $\mu = \mu_i$.

\begin{proposition}
\label{prop:corona}
If $G$ is a (connected) graph with $n(G)\ge 2$, and $H$ is an arbitrary graph, then 
$$\mu(G\circ H) = n(G)n(H)\,.$$
Moreover, for $k\geq 1$, $\mu(G\circ \overline{K}_k) = k\cdot n(G) = \mu_i(G\circ \overline{K}_k)$.
\end{proposition}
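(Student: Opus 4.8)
The plan is to prove the two statements of Proposition~\ref{prop:corona} somewhat separately, reusing the first to get the second. First I would fix notation: write $V(G)=\{v_1,\dots,v_{n(G)}\}$, let $H_i$ denote the $i^{\rm th}$ copy of $H$ attached to $v_i$, and let $X$ be the union over all $i$ of the vertex sets $V(H_i)$. Note that $|X| = n(G)n(H)$, so it suffices to show (a) $X$ is a mutual-visibility set, and (b) no larger mutual-visibility set exists. For the upper bound I would observe that each $v_i$ is a cut vertex of $G\circ H$ whenever $n(G)\ge 2$ (since $G$ is connected on at least two vertices), and that any shortest path between a vertex of $H_i$ and a vertex of $H_j$ with $i\ne j$ must pass through $v_i$ and $v_j$. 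Hence if some $v_i$ belonged to a mutual-visibility set $Y$, then $v_i$ together with any vertex of $H_i$ would have their unique connecting geodesic (the single edge) fine, but more importantly a vertex of $H_i$ and a vertex of $H_j$ in $Y$ would fail to be $Y$-visible because every geodesic between them runs through $v_i\in Y$; careful case analysis shows $Y$ can contain at most the vertices of the $H_i$'s plus at most one $v_i$ in degenerate situations, and a short argument rules out even that gain, giving $\mu(G\circ H)\le n(G)n(H)$.

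For the lower bound, i.e.\ that $X$ itself is a mutual-visibility set, I would take two vertices $x\in V(H_i)$ and $y\in V(H_j)$. If $i=j$, then $x$ and $y$ are either adjacent (an edge is a geodesic with no internal vertices) or at distance $2$ through $v_i\notin X$, so they are $X$-visible. If $i\ne j$, pick a shortest $v_i,v_j$-path $Q$ in $G$; then $x,v_i,Q,v_j,y$ is a shortest $x,y$-path in $G\circ H$, and its internal vertices are $v_i$, the internal vertices of $Q$, and $v_j$ — none of which lie in $X$. Hence $x$ and $y$ are $X$-visible, so $X$ is a mutual-visibility set and $\mu(G\circ H)\ge n(G)n(H)$. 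Combining with the upper bound gives equality.

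For the ``Moreover'' part with $H=\overline{K}_k$, the equality $\mu(G\circ\overline{K}_k)=k\cdot n(G)$ is the special case $n(H)=k$ of what was just proved. It remains to see $\mu_i(G\circ\overline{K}_k)=k\cdot n(G)$ as well; by \eqref{eq:upper-mu-alpha} we have $\mu_i \le \mu = k\cdot n(G)$, so I only need a \emph{lower} bound. But the set $X$ constructed above, namely the union of all $k$-element independent copies $V(H_i)=V(\overline{K}_i)$, is edgeless in $G\circ\overline{K}_k$: within a copy there are no edges since $\overline{K}_k$ is edgeless, and between distinct copies $H_i,H_j$ there are no edges in the corona (the only new edges join $v_i$ to $H_i$). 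So $X$ is an independent mutual-visibility set of size $k\cdot n(G)$, yielding $\mu_i(G\circ\overline{K}_k)\ge k\cdot n(G)$ and hence equality.

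I expect the main obstacle to be the upper bound $\mu(G\circ H)\le n(G)n(H)$, specifically making rigorous the claim that a mutual-visibility set cannot profitably include any vertex $v_i$ of the ``base'' graph $G$. The subtlety is that including one $v_i$ costs visibility between vertices in different copies $H_a,H_b$ only when the geodesic from $H_a$ to $H_b$ is forced through $v_i$ — which happens iff $v_i$ lies on every shortest $v_a,v_b$-path — so one must argue that choosing $v_i$ instead of (a vertex of) $H_i$ never yields a net increase, and one must also handle the possibility of several base vertices $v_i$ simultaneously in the set. I would handle this by a clean exchange/counting argument: from any mutual-visibility set $Y$, show $|Y\cap (\{v_i\}\cup V(H_i))|\le k$ for each $i$ — if $v_i\in Y$ and $Y$ meets two different copies $H_a,H_b$, derive a contradiction using a base vertex whose removal disconnects, and if $Y$ meets at most one copy then $|Y|$ is trivially small — which sums to $|Y|\le k\cdot n(G)=n(G)n(H)$.
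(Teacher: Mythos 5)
Your lower bound and the ``Moreover'' part are fine and essentially the paper's argument: the union $X=\bigcup_i V(H_i)$ is verified to be a mutual-visibility set (within a copy via the non-$X$ vertex $v_i$, across copies via a geodesic $x,v_i,\ldots,v_j,y$ running through base vertices only), and for $H=\overline{K}_k$ this set is independent, which settles $\mu_i$. The genuine gap is in the upper bound, which is exactly the part you yourself flag as the obstacle and never actually prove. The counting scheme you sketch rests on two claims that are wrong as stated. First, ``if $v_i\in Y$ and $Y$ meets two different copies $H_a,H_b$, derive a contradiction'' is false: in $C_4\circ K_1$ with base vertices $v_1,v_2,v_3,v_4$ and pendant vertices $h_1,\ldots,h_4$, the set $Y=\{v_1,h_2,h_4\}$ is a mutual-visibility set containing a base vertex and meeting two copies, because a geodesic between $H_a$ and $H_b$ is only forced through $v_a$ and $v_b$, not through an arbitrary $v_i\in Y$. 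Second, the fallback ``if $Y$ meets at most one copy then $|Y|$ is trivially small'' gives at best $|Y|\le n(G)+n(H)$, which exceeds $n(G)n(H)$ when $n(H)=1$, precisely the case $G\circ K_1$ where the bound $\mu=n(G)$ still has to be proved; moreover $Y$ may contain many base vertices, so this case is not trivial. Even the target inequality $|Y\cap(\{v_i\}\cup V(H_i))|\le n(H)$ for every $i$ is false in general: if $H$ is complete, then $\{v_i\}\cup V(H_i)$ is a clique and hence a mutual-visibility set of size $n(H)+1$ meeting that closed copy in $n(H)+1$ vertices.

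The correct local statement (and the one the paper uses) is different: if $v_i\in Y$ and $Y$ contains \emph{any} vertex $w$ outside $\{v_i\}\cup V(H_i)$ --- whether $w$ is a base vertex or lies in another copy --- then $Y\cap V(H_i)=\emptyset$, because $v_i$ separates $V(H_i)$ from the rest of $G\circ H$, so every geodesic from a vertex of $H_i$ to $w$ passes through $v_i\in Y$. Granting this, set $k=|Y\cap V(G)|$ and count
$$|Y|\ \le\ k+(n(G)-k)\,n(H)\ =\ n(G)n(H)-k\bigl(n(H)-1\bigr)\ \le\ n(G)n(H)\,,$$
which handles $n(H)=1$ as well; the one residual configuration $Y\subseteq\{v_i\}\cup V(H_i)$ gives $|Y|\le n(H)+1\le n(G)n(H)$ since $n(G)\ge 2$. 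Your proposal never isolates this ``$v_i$ plus something outside its closed copy'' trichotomy, and the case analysis you do propose would not close; so as written the upper bound, and with it the proposition, is not established.
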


\proof
Let $V(G) = \{v_1, \ldots, v_{n(G)}\}$, and let $H_i$ be the copy of $H$ whose vertices are in $G\circ H$  adjacent to $v_i$, $i\in [n(G)]$. Then $(G\circ H)[V(H_i)\cup \{v_i\}]$ is the join of the one vertex graph $K_1$ and the graph $H_i$. 
If $H_i$ is not complete, we can see it directly or deduce from~\cite[Corollary 4.10]{DiStefano-2021+} that 
%
%
$\mu((G\circ H)[V(H_i)]) = n(H)$. Hence, no matter whether $H$ is complete or not, $\cup_{i=1}^{n(G)} V(H_i)$ is a mutual-visibility set of $G\circ H$ and consequently $\mu(G\circ H) \ge n(G)n(H)$. 

For the reversed inequality assume on the contrary that $\mu(G\circ H) > n(G)n(H)$. Let $X$ be an arbitrary mutual-visibility set of $G\circ H$ of order $\mu(G\circ H)$. Then $X$ necessarily contains at least one vertex of $G$. That is, setting $k = |X\cap V(G)|$, we have $k\ge 1$. Note that if $v_i\in X$, then, since $n(G)\ge 2$, we have $V(H_i)\cap X = \emptyset$. It follows that $|X| \le k + (n(G) - k)n(H) = n(G)n(H) - k(n(H)-1) \le n(G)n(H)$. This contradiction implies that $\mu(G\circ H) \le n(G)n(H)$.

The second assertion of the theorem follows from the already proved assertion and the fact that the set $\cup_{i=1}^{n(G)} V(H_i)$ is independent when $H = \overline{K}_k$. 
\qed

We point out that in Proposition~\ref{prop:corona} the graph $H$ need not be connected because $G\circ H$ is connected no matter whether $H$ is connected or not.

\section{Mutual-visibility in Cartesian products}
\label{sec:cp}

In this section we first give an upper and a lower bound on the mutual-visibility number of general Cartesian product graphs, where the latter bound is expressed in terms of the independent mutual-visibility number. Then we prove that if $\mu_i(G) = \mu(G)$, then $\mu(K_k\cp G) = k\cdot \mu(G)$. In the second part we reduce the problem of computing $\mu(K_m \cp K_n)$ to the well-known Zarenkiewitz's problem~\cite{jukna-2011, nagy-2019, west-2021, z-1951}. This implies that to compute $\mu(G\cp H)$ is an intrinsically difficult problem. 

Recall that the {\em Cartesian product} $G\cp H$ of graphs $G$ and $H$ has the vertex set $V(G)\times V(H)$ and the edge set $E(G\cp H)  = \{(g,h)(g',h'):\ gg'\in E(G)\mbox{ and } h=h', \mbox{ or, } g=g' \mbox{ and }  hh'\in E(H)\}$. If $(g,h)\in V(G\cp H)$, then the {\em $G$-layer}  $G^h$ through the vertex $(g,h)$ is the subgraph of $G\cp H$ induced by the vertices $\{(g',h):\ g'\in V(G)\}$. Similarly, the {\em $H$-layer} $^g\!H$ through $(g,h)$ is the subgraph of $G\cp H$ induced by the vertices $\{(g,h'):\ h'\in V(H)\}$. Recall further the following known result. (For more information on the Cartesian product we refer to the book~\cite{Hammack-2011}.)

\begin{lemma}
\label{lem:distance-in-cp}
If $G$ and $H$ are graphs and $(g,h),(g',h')\in V(G\cp H)$, then 
$$d_{G \cp H}((g,h),(g',h')) = d_{G}(g,g') + \,d_{H}(h,h')\,.$$
\end{lemma}

Our first result on Cartesian products reads as follows. 

\begin{theorem}
\label{thm:cp-bounds}
If $G$ and $H$ are graphs, then the following holds. 
\begin{align*}
(i) & \quad \mu(G\cp H) \le \min \{\mu(G)n(H), \mu(H)n(G)\}\,, \\
(ii) & \quad \mu(G\cp H) \ge \max \{\mu(G)\mu_i(H), \mu(H)\mu_i(G)\}\,.
\end{align*}
\end{theorem}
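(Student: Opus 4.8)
The plan is to treat the two inequalities separately, and within each, to prove one of the two symmetric halves (the other following by the symmetry $G\cp H \cong H\cp G$). For part $(i)$, the idea is that a mutual-visibility set $X$ of $G\cp H$ cannot contain too many vertices in any single $H$-layer $^g\!H$. More precisely, I would first argue that for a fixed $g\in V(G)$, the set $X_g = \{h : (g,h)\in X\}$ is a mutual-visibility set of $H$. To see this, take $h,h'\in X_g$ and a shortest $(g,h),(g,h')$-path $P$ in $G\cp H$ with no interior vertex in $X$; by Lemma~\ref{lem:distance-in-cp}, $d_{G\cp H}((g,h),(g,h')) = d_H(h,h')$, and since the first-coordinate distance contributes $0$, one can project $P$ to the $H$-layer (or directly note that a shortest path between two vertices of the same $H$-layer can be taken inside that layer) to obtain a shortest $h,h'$-path in $H$ avoiding $X_g$. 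Hence $|X_g|\le\mu(H)$ for every $g$, and summing over the $n(G)$ layers gives $|X|\le \mu(H)n(G)$; the symmetric bound gives the minimum.

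For part $(ii)$, I would construct an explicit mutual-visibility set of size $\mu(H)\mu_i(G)$. Let $A$ be an independent mutual-visibility set of $G$ with $|A|=\mu_i(G)$, and let $B$ be a mutual-visibility set of $H$ with $|B|=\mu(H)$. I claim $X = A\times B$ is a mutual-visibility set of $G\cp H$. Take two distinct vertices $(g,h),(g',h')\in X$. If $g=g'$, then $h\ne h'$ and I use a shortest $h,h'$-path in $H$ avoiding $B$, lifted to the $H$-layer $^g\!H$; its interior vertices have first coordinate $g=g'\in A$ but second coordinate outside $B$, so they lie outside $A\times B$. The symmetric case $h=h'$ is analogous. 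The remaining case is $g\ne g'$ and $h\ne h'$: here I would take a shortest $g,g'$-path $P_G$ in $G$ with $V(P_G)\cap A=\{g,g'\}$, a shortest $h,h'$-path $P_H$ in $H$ with $V(P_H)\cap B=\{h,h'\}$, and combine them into a shortest path in $G\cp H$ by first traversing the $G$-layer copy of $P_G$ and then the $H$-layer copy of $P_H$ (valid as a geodesic by Lemma~\ref{lem:distance-in-cp}). An interior vertex of this combined path is either $(g'',h)$ with $g''$ interior to $P_G$, hence $g''\notin A$, or $(g',h'')$ with $h''$ interior to $P_H$, hence $h''\notin B$; either way it avoids $A\times B$. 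Thus $\mu(G\cp H)\ge \mu(H)\mu_i(G)$, and swapping the roles of $G$ and $H$ gives the maximum.

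The part I expect to require the most care is the mixed case ($g\ne g'$, $h\ne h'$) of part $(ii)$: one must be sure the "L-shaped" concatenation of $P_G$ and $P_H$ is genuinely a geodesic of $G\cp H$ — which is exactly what Lemma~\ref{lem:distance-in-cp} guarantees, since its length is $d_G(g,g')+d_H(h,h') = d_{G\cp H}((g,h),(g',h'))$ — and that \emph{both} endpoints being the only elements of $A$ (resp.\ $B$) on their respective factor paths is what forces the corner vertex $(g',h)$ to stay out of $X$ as well (here we genuinely need $A$ independent only insofar as the "same-layer" cases go through cleanly; the essential structural input is that $g,g'$ are $A$-visible in $G$ and $h,h'$ are $B$-visible in $H$). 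A minor subtlety worth a sentence is the degenerate situation where $A$ or $B$ has size $\le 1$, or where $G$ or $H$ is trivial, but these are handled trivially by the same argument. No substantial new ideas beyond Lemma~\ref{lem:distance-in-cp} and the definitions seem necessary.
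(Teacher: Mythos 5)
Your part $(i)$ is correct and is essentially the paper's argument (convexity of layers, counted layer by layer). Part $(ii)$, however, has a genuine gap in the mixed case $g\ne g'$, $h\ne h'$: the ``L-shaped'' path you build from $(g,h)$ along the copy of $P_G$ in the layer $G^{h}$ and then along the copy of $P_H$ in the layer $^{g'}\!H$ has the corner $(g',h)$ as an interior vertex, and this corner lies \emph{in} $X=A\times B$, since $g'\in A$ and $h\in B$. Your classification of interior vertices as ``$(g'',h)$ with $g''$ interior to $P_G$, or $(g',h'')$ with $h''$ interior to $P_H$'' silently omits exactly this vertex, and your parenthetical claim that the endpoint conditions ``force the corner vertex $(g',h)$ to stay out of $X$'' is false (the other L-shape fails for the same reason at $(g,h')$). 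Your accompanying remark also gets the role of independence backwards: the same-layer cases need no independence at all (the lifted factor geodesic already has all interior second, resp.\ first, coordinates outside $B$, resp.\ $A$), whereas independence is needed precisely to repair the mixed case.

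The repair is the route the paper takes. Since $A$ is independent and $g,g'\in A$, they are non-adjacent, so the neighbor $g''$ of $g$ on $P_G$ is an interior vertex of $P_G$ and hence $g''\notin A$. Build the $x,y$-path that starts with the edge $(g,h)(g'',h)$, then follows the copy of $P_H$ inside the layer $^{g''}\!H$ from $(g'',h)$ to $(g'',h')$, and finally follows the copy of the $g'',g'$-subpath of $P_G$ inside the layer $G^{h'}$. By Lemma~\ref{lem:distance-in-cp} its length is $d_G(g,g')+d_H(h,h')$, so it is a geodesic, and every interior vertex has first coordinate equal to $g''\notin A$ or equal to an interior vertex of $P_G$, hence lies outside $A\times B$. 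With this modification (and the trivial subcase $h=h'$ noted separately) your proof of $(ii)$ goes through; everything else in your write-up is sound.
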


\proof
(i) If $h\in V(H)$, then the $G$-layer $G^h$ is a convex subgraph of $G\cp H$, hence each pair of vertices from $V(G^h)$ can only be mutually visible inside the layer. Applying this fact to each vertex of $H$ we get that $\mu(G\cp H) \le n(H)\mu(G)$, cf.~\cite[Lemma~2.3]{DiStefano-2021+}. Analogously we see that $\mu(G\cp H) \le n(G)\mu(H)$ also holds. 

(ii) Let $X_G$ be a mutual-visibility set of $G$ of size $\mu(G)$, and let $X_H$ be an independent mutual-visibility set of $H$ of size $\mu_i(H)$. We claim that $X = X_G \times X_H$ is a mutual-visibility set of $G\cp H$. 

Let $x = (g,h)$ and $y = (g',h')$ be different vertices of $X$. We need to show $x$ and $y$ are $X$-visible in $G\cp H$. Suppose first that $h = h'$. Then $x$ and $y$ both lie in the $G$-layer $G^h$. Since $G^h$ is a convex subgraph of $G\cp H$ isomorphic to $G$, and $X_G$ is a mutual-visibility set of $G$, the vertices $x$ and $y$ are $X$-visible in $G\cp H$. In the rest we may thus assume that $h\ne h'$. 

Since  $X_G$ is a mutual-visibility set of $G$, there exists a shortest $g,g'$-path $P_G$ such that $V(P_G) \cap X_G = \{g,g'\}$. (In the case when $g=g'$, the path $P_G$ consists of a single vertex $g=g'$.) Further,  since $X_H$ is a mutual-visibility set of $H$, there exists a shortest $h,h'$-path $P_H$ such that $V(P_H) \cap X_H = \{h,h'\}$. Let $h''$ be the neighbor of $h$ on $P_H$. Since $X_H$ is independent we infer that $h'' \ne h'$. Consider now the $x,y$-path $P_{G\cp H}$ defined as follows: $P_{G\cp H}$ starts with the edge $(g,h)(g,h'')$, then continues along the copy of $P_G$ in the layer $G^{h''}$, and ends with the copy of the $h'',h'$-subpath of $P_H$ in the layer $^{g'}\!H$. (Note that if $g=g'$, then $P_{G\cp H}$ is simply the copy of $P_H$ in the layer $^{g}\!H$.) Using Lemma~\ref{lem:distance-in-cp} it follows that $P_{G\cp H}$ is a shortest $x,y$-path in $G\cp H$. Moreover, since  $V(P_{G\cp H}) \cap X = \{x,y\}$ we conclude that  $X$ is a mutual-visibility set. Hence $\mu(G\cp H) \ge \mu(G)\mu_i(H)$. By the commutativity of the Cartesian product we also have $\mu(G\cp H) \ge \mu(H)\mu_i(G)$ and the result follows. 
\qed

Theorem~\ref{thm:cp-bounds}(ii) together with~\eqref{eq:trees} yields the following consequence which should be compared with the main theorem of~\cite{tian-2021b} asserting that $\gp(T_1\cp T_2)  =  \gp(T_1) + \gp(T_2)$ holds for trees $T_1$ and $T_2$ of order at least $3$. 

\begin{corollary}
\label{cor:trees-lower}
If  $T_1$ and $T_2$ are trees of order at least $3$, then $\mu(T_1\cp T_2) \ge \mu(T_1)\mu(T_2)$.
\end{corollary}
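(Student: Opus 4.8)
The plan is to derive Corollary~\ref{cor:trees-lower} directly from Theorem~\ref{thm:cp-bounds}(ii) together with the identity~\eqref{eq:trees}. Recall that~\eqref{eq:trees} states that for any tree $T$ with $n(T)\ge 3$ we have $\mu(T) = |L| = \mu_i(T)$, where $L$ is the leaf set of $T$; in particular $\mu_i(T) = \mu(T)$ for such trees.

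First I would apply Theorem~\ref{thm:cp-bounds}(ii) to $G = T_1$ and $H = T_2$, which gives
$$\mu(T_1\cp T_2) \ge \max\{\mu(T_1)\mu_i(T_2),\ \mu(T_2)\mu_i(T_1)\}\,.$$
Then I would substitute $\mu_i(T_2) = \mu(T_2)$, valid because $n(T_2)\ge 3$ by hypothesis and~\eqref{eq:trees} applies. This immediately yields $\mu(T_1\cp T_2) \ge \mu(T_1)\mu(T_2)$, which is the claimed bound. (Using the other term in the maximum together with $\mu_i(T_1) = \mu(T_1)$ gives the same conclusion, so either branch suffices.)

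There is essentially no obstacle here: the corollary is a one-line consequence of the theorem and the already-established fact~\eqref{eq:trees}. The only thing to be mindful of is that both hypotheses $n(T_1)\ge 3$ and $n(T_2)\ge 3$ are needed — at least one of them to invoke~\eqref{eq:trees} for the relevant factor, and in fact the statement is phrased symmetrically so it is cleanest to note that~\eqref{eq:trees} holds for whichever tree plays the role of $H$. I would write the proof in two sentences, citing Theorem~\ref{thm:cp-bounds}(ii) and~\eqref{eq:trees}, and optionally remark that the bound is tight in simple cases and invites comparison with the exact formula $\gp(T_1\cp T_2) = \gp(T_1) + \gp(T_2)$ from~\cite{tian-2021b} that is already mentioned before the corollary statement.
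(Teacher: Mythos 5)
Your proposal is correct and follows exactly the paper's route: the corollary is stated as an immediate consequence of Theorem~\ref{thm:cp-bounds}(ii) combined with~\eqref{eq:trees}, which is precisely your substitution $\mu_i(T_2)=\mu(T_2)$ in the lower bound. Nothing is missing.
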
 

The bound of Theorem~\ref{thm:cp-bounds}(i) is sharp. For instance, it was proved in~\cite[Theorem 4.6]{DiStefano-2021+} that if $ r >3$ and $s > 3$,  then $\mu(P_r\cp P_s) = 2\cdot \min \{r,s\}$. Moreover, the two bounds of  Theorem~\ref{thm:cp-bounds} can coincide. We demonstrate this by the next result in which we determine the mutual-visibility number of a large class of Cartesian product graphs.  

\begin{theorem}
\label{thm:cp-complete-factor}
If $k\ge 2$ and  $G$ is a graph with $\mu(G) = \mu_i(G)$, then 
$$\mu(K_k\cp G) = k\cdot \mu(G)\,.$$ 
\end{theorem}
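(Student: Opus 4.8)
The plan is to obtain the equality by sandwiching $\mu(K_k\cp G)$ between two matching bounds, both supplied by Theorem~\ref{thm:cp-bounds}; the hypothesis $\mu(G)=\mu_i(G)$ is exactly what makes the general lower bound of part~(ii) meet the general upper bound of part~(i). The only preparatory facts I need are the parameters of $K_k$: clearly $n(K_k)=k$ and $\mu_i(K_k)=\alpha(K_k)=1$, while $\mu(K_k)=k$ because any two distinct vertices of $K_k$ are adjacent, so the connecting edge is their unique geodesic and has no interior vertex; hence every subset of $V(K_k)$ is trivially a mutual-visibility set, $V(K_k)$ in particular.

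For the upper bound I would apply Theorem~\ref{thm:cp-bounds}(i) with first factor $K_k$ and second factor $G$:
$$\mu(K_k\cp G)\le \min\{\mu(K_k)\,n(G),\ \mu(G)\,n(K_k)\}=\min\{k\,n(G),\ k\,\mu(G)\}=k\cdot\mu(G)\,,$$
where the last equality uses $\mu(G)\le n(G)$. For the lower bound I would apply Theorem~\ref{thm:cp-bounds}(ii) in the same way:
$$\mu(K_k\cp G)\ge \max\{\mu(K_k)\,\mu_i(G),\ \mu(G)\,\mu_i(K_k)\}\ge \mu(K_k)\,\mu_i(G)=k\cdot\mu_i(G)=k\cdot\mu(G)\,,$$
the final step being the hypothesis $\mu_i(G)=\mu(G)$. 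Combining the two displays gives $\mu(K_k\cp G)=k\cdot\mu(G)$.

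Since the heavy lifting is already done inside Theorem~\ref{thm:cp-bounds}, there is essentially no obstacle left; the only points requiring a moment's attention are the correct evaluation $\mu(K_k)=k$ (needed so that the bound factors as $k\cdot\mu(G)$ rather than collapsing) and $\mu_i(K_k)=1$, together with the observation that the restriction $k\ge 2$ merely excludes the degenerate case $k=1$, where $K_1\cp G\cong G$ and the statement is trivial without any hypothesis on $G$.
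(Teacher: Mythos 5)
Your proof is correct and follows essentially the same route as the paper: both directions come from sandwiching $\mu(K_k\cp G)$ between the upper bound of Theorem~\ref{thm:cp-bounds}(i) (using $n(K_k)=k$) and the lower bound of Theorem~\ref{thm:cp-bounds}(ii) (using $\mu(K_k)=k$ and the hypothesis $\mu(G)=\mu_i(G)$). The extra observations you include, such as $\mu_i(K_k)=1$ and the explicit evaluation of the minimum via $\mu(G)\le n(G)$, are harmless but not needed, since the paper simply invokes the single relevant term of each bound.
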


\proof
Since $\mu(K_k) = k$, Theorem~\ref{thm:cp-bounds}(i) yields 
$$\mu(K_k\cp G) \le n(K_k) \cdot \mu(G) = k\cdot \mu(G)\,.$$
On the other hand, using Theorem~\ref{thm:cp-bounds}(ii) and the assumption $\mu(G) = \mu_i(G)$, we get 
$$\mu(K_k\cp G) \ge \mu(K_k) \cdot \mu_i(G) =  k\cdot \mu(G)$$
and we are done. 
\qed

\subsection{Reduction of $\mu(K_m\cp K_n)$ to Zarankievicz's problem}

The following lemma is the key for the reduction from the title. 

\begin{lemma}
\label{lem:hamming-graphs}
Let $r,s\ge 2$ and let $X\subseteq V(K_r\cp K_s)$. Then $X$ is a mutual-visibility set of $K_r\cp K_s$ if and only if $|X\cap V(C)|\le 3$ holds for each induced $4$-cycle $C$ of $K_r\cp K_s$. 
\end{lemma}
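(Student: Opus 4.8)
The plan is to exploit the very restricted metric structure of $K_r\cp K_s$. Write vertices of $K_r\cp K_s$ as pairs $(i,j)$ with $i\in[r]$, $j\in[s]$. By Lemma~\ref{lem:distance-in-cp}, $d_{K_r\cp K_s}((i,j),(i',j'))$ equals the number of coordinates in which the two vertices differ; in particular, since $r,s\ge 2$, the diameter is $2$, and two vertices are at distance $2$ exactly when they differ in both coordinates. The key preliminary observation I would record is that if $(i_1,j_1)$ and $(i_2,j_2)$ are at distance $2$ (so $i_1\ne i_2$ and $j_1\ne j_2$), then their only common neighbours are $(i_1,j_2)$ and $(i_2,j_1)$; this is a one-line check of which of the four possible adjacency patterns can hold simultaneously. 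Consequently, the only vertices that can occur as an internal vertex of a shortest $(i_1,j_1),(i_2,j_2)$-path are $(i_1,j_2)$ and $(i_2,j_1)$. I would also note that the four vertices $(i_1,j_1),(i_1,j_2),(i_2,j_2),(i_2,j_1)$ induce a $4$-cycle, and that, conversely, every induced $4$-cycle of $K_r\cp K_s$ has this ``rectangle'' shape: along an induced $4$-cycle the edges must alternate the changed coordinate, for otherwise three consecutive vertices lie in a common $K_r$- or $K_s$-layer, which is complete, producing a chord.

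For the forward direction, assume $X$ is a \mvs and let $C$ be an induced $4$-cycle with cyclic vertex order $a,b,c,d$. Then $a$ and $c$ are non-adjacent, hence $d(a,c)=2$, and by the observation above the only internal vertices available on a shortest $a,c$-path are $b$ and $d$. If $|X\cap V(C)|=4$, then every shortest $a,c$-path meets $X$ in an internal vertex, so $a$ and $c$ are not $X$-visible, contradicting that $X$ is a \mvs. Hence $|X\cap V(C)|\le 3$.

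For the converse, assume $|X\cap V(C)|\le 3$ for every induced $4$-cycle $C$ and take distinct $x,y\in X$. If $d(x,y)=1$, the edge $xy$ is a shortest path with no internal vertices, so $x,y$ are $X$-visible. If $d(x,y)=2$, write $x=(i_1,j_1)$ and $y=(i_2,j_2)$ with $i_1\ne i_2$, $j_1\ne j_2$, and let $C$ be the induced $4$-cycle on $x,(i_1,j_2),y,(i_2,j_1)$. Since $x,y\in X\cap V(C)$ and $|X\cap V(C)|\le 3$, at least one of $(i_1,j_2),(i_2,j_1)$ is not in $X$, say $(i_1,j_2)\notin X$; then $x,(i_1,j_2),y$ is a shortest $x,y$-path meeting $X$ only in $\{x,y\}$, so $x,y$ are $X$-visible. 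Thus $X$ is a \mvs.

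I do not expect a serious obstacle here; the one step to get exactly right is the small case analysis establishing that two vertices at distance $2$ in $K_r\cp K_s$ have precisely the two ``corner'' common neighbours — this is what guarantees that no alternative shortest path can restore visibility once both corners lie in $X$. The companion remark identifying induced $4$-cycles with rectangles is convenient but, strictly speaking, unnecessary for the forward implication, since an arbitrary induced $4$-cycle automatically has each of its two antipodal pairs at distance $2$ with exactly the required common-neighbour structure.
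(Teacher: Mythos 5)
Your proof is correct and follows essentially the same route as the paper: identify induced $4$-cycles with Cartesian ``rectangles'' $\{(i_1,j_1),(i_1,j_2),(i_2,j_2),(i_2,j_1)\}$, and for the converse route a distance-$2$ pair through whichever corner is missing from $X$. The only cosmetic difference is in the forward direction, where the paper simply notes that such a $4$-cycle is convex (so $X$ restricted to it must be a mutual-visibility set of $C_4$, of size at most $3$), while you verify directly that the two corners are the only common neighbours of an antipodal pair --- the same fact, argued by hand.
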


\proof
Let $V(K_k) = [k]$ and set $G = K_r\cp K_s$. Suppose first that $X$ is a mutual-visibility set of $G$. Consider an arbitrary induced $4$-cycle $C$ of $G$. Then $C$ has consecutive vertices of the form $(i,k), (j,k), (j,\ell), (i,\ell)$, where $i\ne j$ and $k\ne \ell$. In particular, $C$ is a convex subgraph of $G$ and hence $|X\cap V(C_4)|\le 3$ must hold. 

Conversely, let $X\subseteq V(K_r\cp K_s)$ be such that $|X\cap V(C)|\le 3$ holds for each induced $4$-cycle $C$ of $G$. Consider arbitrary vertices $x = (i,k)$ and $y = (j,\ell)$ of $X$. If either $i=j$ or $k=\ell$, then $x$ and $y$ lie either in a common $K_s$-layer or in a common $K_r$-layer. In either case, $x$ and $y$ are adjacent and hence $X$-visible. Suppose next that $i\ne j$ and $k\ne \ell$. Then the vertices $x$ and $y$ lie in the convex $4$-cycle $C$ with the consecutive vertices $(i,k), (j,k), (j,\ell), (i,\ell)$. Since $|X\cap C| \le 3$, we may without loss of generality assume that $(j,k)\notin X$. But then the path $x=(i,k), (j,k), (j,\ell) = y$ demonstrates that $x$ and $y$ are $X$-visible. Hence we can conclude that  $X$ is a mutual-visibility set of $G$.
\qed

Each induced $4$-cycle $C$ of $K_r\cp K_s$ can be written as $K_2\cp K'_2$, where $K_2$ is the projection of $C$ onto $G$ and $K_2'$ is the projection of $C$ onto $H$. One often says that such a $4$-cycle is a {\em Cartesian square}. With this terminology in hand, Lemma~\ref{lem:hamming-graphs} asserts that the problem of computing $\mu(K_r\cp K_s)$ is to find a largest subset of vertices $X$, so that $X$ intersects each Cartesian square in at most three vertices. This brings us to the following: 

\begin{definition}[Zarankiewicz's Problem]
Let $m$, $n$, $s$, and $t$ be given positive integers. Determine $z(m, n; s, t)$ the maximum number of $1$s that an $m\times n$ binary matrix can have provided that it contains no constant $s\times t$ submatrix of $1$s.
\end{definition}

From Lemma~\ref{lem:hamming-graphs} we deduce: 

\begin{corollary}
\label{cor:we-discovered-Zarenkiewitz}
If $m, n\ge 2$, then $\mu(K_m\cp K_n) = z(m,n;2,2)$. 
\end{corollary}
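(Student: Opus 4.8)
The plan is to translate the condition ``$|X \cap V(C)| \le 3$ for every induced $4$-cycle $C$'' established in Lemma~\ref{lem:hamming-graphs} into the language of binary matrices, and to show that the translation is a bijection that preserves the relevant counting. First I would fix the bijection between $V(K_m \cp K_n)$ and the cells of an $m \times n$ grid: the vertex $(i,k)$ with $i \in [m]$, $k \in [n]$ corresponds to the cell in row $i$, column $k$. Given a set $X \subseteq V(K_m \cp K_n)$, define the $m \times n$ binary matrix $M_X$ by placing a $1$ in cell $(i,k)$ if and only if $(i,k) \in X$. This is clearly a bijection between subsets of $V(K_m \cp K_n)$ and $m \times n$ binary matrices, and $|X|$ equals the number of $1$s in $M_X$.

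Next I would identify the combinatorial objects on both sides. On the graph side, by Lemma~\ref{lem:hamming-graphs} (together with the ``Cartesian square'' reformulation given just before the definition) the induced $4$-cycles of $K_m \cp K_n$ are exactly the subgraphs with consecutive vertices $(i,k), (j,k), (j,\ell), (i,\ell)$ for distinct $i,j \in [m]$ and distinct $k,\ell \in [n]$. On the matrix side, a constant $2 \times 2$ submatrix of $1$s is determined by a choice of two distinct rows $i,j$ and two distinct columns $k,\ell$ such that all four cells $(i,k),(i,\ell),(j,k),(j,\ell)$ contain a $1$. So the four cells corresponding to an induced $4$-cycle are precisely the four cells of a $2 \times 2$ submatrix, and conversely. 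Hence $X$ contains all four vertices of some induced $4$-cycle $C$ if and only if $M_X$ contains a constant $2 \times 2$ submatrix of $1$s.

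Combining these observations, $X$ is a mutual-visibility set of $K_m \cp K_n$ iff $X$ meets every induced $4$-cycle in at most $3$ vertices (Lemma~\ref{lem:hamming-graphs}) iff $M_X$ has no constant $2 \times 2$ all-ones submatrix. Therefore the largest mutual-visibility set of $K_m \cp K_n$ corresponds to the $m \times n$ binary matrix with the maximum number of $1$s and no $2 \times 2$ all-ones submatrix, whose size is by definition $z(m,n;2,2)$. This gives $\mu(K_m \cp K_n) = z(m,n;2,2)$.

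This argument is essentially bookkeeping: the only thing that needs care — and the one place I'd expect a subtle slip — is making sure the quantifier matching is exact, namely that ``an induced $4$-cycle with all four vertices in $X$'' and ``a $2 \times 2$ submatrix with all four entries equal to $1$'' are in genuine correspondence (in particular that every $2 \times 2$ all-ones submatrix indeed arises from an \emph{induced} $4$-cycle, which holds because $i \ne j$ and $k \ne \ell$ force the four vertices to be distinct and to induce exactly a $C_4$ in $K_m \cp K_n$). Once that is verified, the corollary follows immediately from Lemma~\ref{lem:hamming-graphs} and the definition of $z(m,n;2,2)$, with no estimation or optimization needed on our part — the difficulty is entirely deferred to the (open) Zarankiewicz problem itself.
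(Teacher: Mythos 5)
Your proposal is correct and follows essentially the same route as the paper: the paper also deduces the corollary directly from Lemma~\ref{lem:hamming-graphs} by identifying vertex subsets of $K_m\cp K_n$ with $m\times n$ binary matrices and induced $4$-cycles (Cartesian squares) with $2\times 2$ all-ones submatrices. Your explicit verification of the quantifier correspondence is just a more detailed writing-out of what the paper leaves implicit.
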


Values $z(m,n;s,t)$ have been extensively studied, cf.~\cite{west-2021}, but even the values $z(m,n;2,2)$ are not known. We recall the following most relevant results that can be found in~\cite{west-2021}, the first two as Theorems 13.2.19. and Theorem 13.2.21.

\begin{theorem}
\label{thm:kovari}
{\rm \cite[K\H{o}v\'ari–-S\'os-Tur\'an, 1954]{kovari-1954}}
For $s,t>1$,     
$$z(m,n;s,t) < (s-1)^{1/t} (n-t+1)m^{1-1/t} + (t-1)m\,.$$
\end{theorem}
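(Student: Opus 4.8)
The plan is to prove the Kővári–Sós–Turán bound on $z(m,n;s,t)$ by the classical double-counting argument on the number of $s$-subsets of rows (or columns) that are ``covered'' by a column.

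First I would set up the notation: let $A$ be an $m \times n$ binary matrix with no all-$1$ constant $s \times t$ submatrix, and let $r_j$ denote the number of $1$s in column $j$, so that the total number of $1$s is $N = \sum_{j=1}^n r_j$. The quantity to count is $T = \sum_{j=1}^n \binom{r_j}{s}$, the number of pairs consisting of a column $j$ together with an $s$-subset of rows that all have a $1$ in column $j$. Counting the same quantity from the side of $s$-subsets of rows: each fixed $s$-subset $S$ of the $m$ rows can be ``completed'' to an all-$1$ submatrix by at most $t-1$ columns, for otherwise we would obtain a constant $s \times t$ submatrix of $1$s. Hence $T \le (t-1)\binom{m}{s}$.

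Next I would combine this with convexity. By Jensen's inequality applied to the convex function $x \mapsto \binom{x}{s}$ (extended to reals), $\sum_j \binom{r_j}{s} \ge n \binom{N/n}{s}$, so $n\binom{N/n}{s} \le (t-1)\binom{m}{s}$. The final step is to extract a clean bound on $N$ from this inequality. For the stated form one works with $s=2$ implicitly rescaled, or more precisely uses the estimate $\binom{x}{t} \ge \left(\frac{x-t+1}{t}\right)^t$-type bounds together with an analogous count done on rows rather than columns, to reach the asymmetric bound $z(m,n;s,t) < (s-1)^{1/t}(n-t+1)m^{1-1/t} + (t-1)m$. Concretely, I would count, for each set of $t$ columns, the number of rows having $1$s in all $t$ of them, bound this by $s-1$ for every $t$-subset of columns, and then apply convexity to $\sum_i \binom{c_i}{t}$ where $c_i$ is the number of $1$s in row $i$; solving the resulting polynomial inequality $\sum_i \binom{c_i}{t} \le (s-1)\binom{n}{t}$ for $N = \sum_i c_i$ via Jensen and a standard estimate on $\binom{x}{t}$ yields the claimed bound.

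The main obstacle is the last step: passing from the convexified inequality $m \binom{N/m}{t} \le (s-1)\binom{n}{t}$ to the explicit closed-form bound requires a careful elementary estimate relating $\binom{x}{t}$ to $(x-t+1)^t/t!$ and then taking $t$-th roots, keeping track of the additive $(t-1)m$ error term that accounts for rows with fewer than $t$ ones (where the binomial coefficient vanishes). This is the only genuinely delicate computation; the combinatorial heart of the argument — the double count bounded by $s-1$ per $t$-set of columns — is immediate from the forbidden-submatrix hypothesis. Since the statement is quoted from \cite{west-2021} and \cite{kovari-1954}, in the paper itself I would simply cite it, but the proof sketch above is the standard route.
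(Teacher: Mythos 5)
The paper offers no proof of this statement at all: it is the classical K\H{o}v\'ari--S\'os--Tur\'an bound, quoted verbatim with citations to \cite{kovari-1954} and \cite{west-2021}, so the paper's ``proof'' is precisely the citation you say you would give. Judged on its own terms, your sketch follows the standard route and its combinatorial heart is correct: with $c_i$ the number of $1$s in row $i$ and $N=\sum_i c_i$, every $t$-subset of columns is all-$1$ in at most $s-1$ rows, so $\sum_i\binom{c_i}{t}\le (s-1)\binom{n}{t}$, and Jensen (with $\binom{x}{t}$ extended by $0$ below $t-1$) gives $m\binom{N/m}{t}\le (s-1)\binom{n}{t}$. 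Note, though, that your opening count (columns against $s$-subsets of rows, bounded by $t-1$ per $s$-set) is the transposed orientation and would produce the bound with the roles of $(m,s)$ and $(n,t)$ exchanged; only the row-side count you switch to afterwards leads to the stated asymmetric form.

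The genuine gap is in the closing extraction, exactly the step you flag as delicate: the estimates you propose, $\binom{N/m}{t}\ge (N/m-t+1)^t/t!$ played against $\binom{n}{t}\le n^t/t!$, only yield $N\le (t-1)m+(s-1)^{1/t}\,n\,m^{1-1/t}$, with $n$ where the theorem has $n-t+1$, i.e.\ a strictly weaker inequality than the one to be proved. To recover the stated constant, compare the falling factorials term by term: since $N/m\le n$, the ratio $\frac{N/m-j}{n-j}$ is nonincreasing in $j$, hence $\prod_{j=0}^{t-1}\frac{N/m-j}{n-j}\ge\bigl(\frac{N/m-t+1}{n-t+1}\bigr)^t$, and the counting inequality becomes $m\bigl(\frac{N/m-t+1}{n-t+1}\bigr)^t\le s-1$, which rearranges to $N\le (t-1)m+(s-1)^{1/t}(n-t+1)m^{1-1/t}$ (the case $N/m\le t-1$ being trivial, and strictness needing only a routine equality check). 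Relatedly, the additive term $(t-1)m$ is not an accounting of rows with fewer than $t$ ones, as you suggest; it is the shift $N/m-(t-1)$ produced by this falling-factorial estimate, or equivalently the trivial case just mentioned.
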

Corollary~\ref{cor:we-discovered-Zarenkiewitz} together with Theorem~\ref{thm:kovari} for the case $s=t=2$ yields:
$$\mu(K_m\cp K_n) = z(m,n;2,2) < (n-1) \sqrt{m} + m = (n-1) \sqrt{m} +m\,.$$

\begin{theorem}
\label{thm:erdos}
{\rm \cite[Brown, 1966; Erd\H{o}s-R\'enyi-S\'os, 1966]{brown-1966, erdos-1966}}
When n is sufficiently large,
$$n^{3/2}-n^{4/3} \le z(n,n;2,2) \le \frac{1}{4} n(1+ \sqrt{4n-3})\,.$$ 
\end{theorem}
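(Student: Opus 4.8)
This is a classical result, and the plan is to prove the two inequalities by the standard routes: a double count for the upper bound, and an explicit algebraic construction for the lower bound. First I would reinterpret an $n\times n$ binary matrix with no all-ones $2\times 2$ submatrix as the bipartite adjacency matrix of a $C_4$-free bipartite graph $B$ with parts $R$ and $C$ of size $n$ each, so that $z(n,n;2,2)=\max e(B)$ over all such $B$. Letting $d_1,\dots,d_n$ be the degrees of the vertices of $R$, I would double count the configurations consisting of a pair of distinct vertices of $C$ together with a common neighbour in $R$: each vertex of $R$ contributes $\binom{d_i}{2}$ of them, while each pair in $C$ is counted at most once, since two common neighbours would close a $C_4$. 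This yields $\sum_{i=1}^{n}\binom{d_i}{2}\le\binom{n}{2}$, and then convexity of $x\mapsto\binom{x}{2}$ (Jensen) turns this into $n\binom{z/n}{2}\le\binom{n}{2}$ with $z=e(B)$; solving the resulting quadratic inequality for $z$ gives the stated upper bound. This part is entirely routine.

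For the lower bound I would use the point--line incidence graph $B_q$ of the projective plane $\mathrm{PG}(2,q)$ for a prime power $q$: it is $(q+1)$-regular and bipartite with both parts of size $q^2+q+1$, and it is $C_4$-free because two distinct points lie on a unique common line and two distinct lines meet in a unique common point. Hence $z(N,N;2,2)\ge (q+1)(q^2+q+1)\sim N^{3/2}$ for $N=q^2+q+1$. To reach an arbitrary large $n$, I would take $q$ to be the largest prime power with $q^2+q+1\le n$, use $B_q$, and pad each side with isolated vertices up to size $n$; this stays bipartite and $C_4$-free and preserves the edge count, so $z(n,n;2,2)\ge (q+1)(q^2+q+1)\ge q^3$.

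The one genuinely nontrivial step is to verify that this suffices, i.e. that $n^{3/2}-q^3\le n^{4/3}$ for $n$ large, which amounts to showing that $\sqrt n-q$ is small. This is the hard part, and it is where number theory enters: one needs a bound on the gap between consecutive prime powers near $\sqrt n$, and any estimate of the form $O(x^{\theta})$ with $\theta<2/3$ (for instance Ingham's classical $\theta=5/8+o(1)$ already suffices) gives, after a short computation, $n=q^2+O(q^{1+\theta})$ and hence $n^{3/2}-q^3=O(q^{2+\theta})=o(n^{4/3})$, yielding the claim. Alternatively one could invoke Brown's $1966$ algebraic construction on $\mathbb{F}_q^3$ via a quadratic-form adjacency condition, which achieves the same order and also settles the non-bipartite $K_{2,2}$-free problem; in that route the delicate point shifts to verifying the $K_{2,2}$-freeness of the quadric graph. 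In either case the number-theoretic interpolation is the only real obstacle; it is precisely what forces an error term at all and what calibrates its order to $n^{4/3}$, since with $\theta$ exactly $2/3$ the argument would only deliver $n^{3/2}-O(n^{4/3})$ with an unspecified constant.
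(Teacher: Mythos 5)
The paper offers no proof of this statement: it is quoted verbatim (via West's book) from Brown and Erd\H{o}s--R\'enyi--S\'os, so there is nothing internal to compare against, and your outline is indeed the classical route. The lower-bound half of your sketch is sound: the point--line incidence graph of $\mathrm{PG}(2,q)$ is $C_4$-free with $(q+1)(q^2+q+1)$ edges, padding with isolated vertices is harmless, and the interpolation to arbitrary $n$ does hinge on a prime-gap estimate $O(x^{\theta})$ with $\theta<2/3$ near $\sqrt{n}$ (Ingham's $5/8$ suffices), exactly as you indicate: taking $q$ the largest prime with $q^2+q+1\le n$ gives $n^{3/2}-q^3=O\bigl(n^{(2+\theta)/2}\bigr)=o(n^{4/3})$, hence the bound for large $n$. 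You correctly identify this number-theoretic step as the only genuinely delicate point.

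The gap is in the upper bound. Your double count gives $\sum_{i}\binom{d_i}{2}\le\binom{n}{2}$ with $\sum_i d_i=z$, and solving the resulting quadratic yields $z\le\tfrac12 n\bigl(1+\sqrt{4n-3}\bigr)$, \emph{not} the stated $\tfrac14 n\bigl(1+\sqrt{4n-3}\bigr)$; your claim that the routine algebra ``gives the stated upper bound'' is therefore false as written. In fact no argument can produce the printed bound: asymptotically $\tfrac14 n(1+\sqrt{4n-3})\sim\tfrac12 n^{3/2}$, which is smaller than the lower bound $n^{3/2}-n^{4/3}$, so the displayed inequality chain is self-contradictory for large $n$. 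The factor $\tfrac14$ is the Erd\H{o}s--R\'enyi--S\'os/Reiman bound for the ordinary Tur\'an number $\mathrm{ex}(n;C_4)$, where $\sum_v d_v=2e$ halves the conclusion; for the bipartite Zarankiewicz quantity $z(n,n;2,2)$ the correct constant is $\tfrac12$, consistent with Theorem~\ref{thm:kovari}, which already gives $z(n,n;2,2)<(n-1)\sqrt{n}+n$. So either the statement is to be read with $\tfrac12$ (evidently a misquotation in the paper), in which case your argument is complete modulo the algebra, or, as printed, it cannot be proved at all; a careful write-up should have flagged that your quadratic does not deliver the claimed constant.
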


\begin{theorem}
\label{thm:west}
{\rm \cite[Theorem 13.2.24.]{west-2021}}
If  $\alpha = \frac{s-1}{st-1}$ and $\beta = \frac{t-1}{st-1}$,
then
$$z(m,n;s,t) \ge \left\lfloor \left(1 - \frac{1}{s!\,t!}\right) m^{1-\alpha} n^{1-\beta} \right\rfloor\,.$$ 
\end{theorem}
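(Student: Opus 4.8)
The plan is to prove this lower bound by the probabilistic \emph{deletion} (alteration) method, choosing the entry-density parameter $p$ precisely so that the relevant expectations collapse to the stated constant $1-\frac{1}{s!\,t!}$. (We may of course assume $st>1$, $m\ge s$ and $n\ge t$, since otherwise the forbidden configuration is vacuous and the statement is trivial.)

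First I would form a random $m\times n$ binary matrix $A$ in which each of the $mn$ entries is independently set to $1$ with probability $p:=m^{-\alpha}n^{-\beta}$; since $\alpha,\beta\ge 0$ and $m,n\ge 1$ we have $p\le 1$, so this is a legitimate model. Let $W$ be the number of $1$-entries of $A$, and let $B$ be the number of ways to choose $s$ rows and $t$ columns whose $st$ common entries are all equal to $1$. By linearity of expectation, $\mathbb{E}[W]=mn\,p$ and $\mathbb{E}[B]=\binom{m}{s}\binom{n}{t}p^{st}\le \frac{m^s n^t}{s!\,t!}\,p^{st}$.

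The crux is the exponent bookkeeping forced by the definitions of $\alpha$ and $\beta$. From $\alpha=\frac{s-1}{st-1}$ and $\beta=\frac{t-1}{st-1}$ one verifies the identities $\alpha+s\beta=1$ and $t\alpha+\beta=1$, equivalently $1-\alpha=s\beta$ and $1-\beta=t\alpha$. Consequently $mn\,p=m^{1-\alpha}n^{1-\beta}$, and at the same time
$$m^s n^t p^{st}=m^{\,s-st\alpha}\,n^{\,t-st\beta}=m^{\,s(1-t\alpha)}\,n^{\,t(1-s\beta)}=m^{\,s\beta}\,n^{\,t\alpha}=m^{1-\alpha}n^{1-\beta}.$$
Hence $\mathbb{E}[B]\le \frac{1}{s!\,t!}\,m^{1-\alpha}n^{1-\beta}$; that is, the expected number of ``bad'' submatrices is at most a $\frac{1}{s!\,t!}$-fraction of the expected number of $1$s.

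To finish, delete one $1$-entry from each all-$1$ $s\times t$ submatrix of $A$; this removes at most $B$ entries and leaves a matrix $A'$ that contains no constant $s\times t$ submatrix of $1$s. Then $\mathbb{E}\big[\,\#\,1\text{s in }A'\,\big]\ge \mathbb{E}[W]-\mathbb{E}[B]\ge \big(1-\frac{1}{s!\,t!}\big)m^{1-\alpha}n^{1-\beta}$, so some realization produces a valid matrix whose number of $1$s is at least this quantity; since that number is an integer, it is at least $\left\lfloor\big(1-\frac{1}{s!\,t!}\big)m^{1-\alpha}n^{1-\beta}\right\rfloor$, which is the claimed bound. I expect the only genuine subtlety to be the identities $1-\alpha=s\beta$ and $1-\beta=t\alpha$: they are exactly what makes $\mathbb{E}[B]$ equal $\frac{1}{s!\,t!}\,\mathbb{E}[W]$ on the nose, and without this calibrated choice of $p$ one obtains only a bound of the same shape but with a weaker constant.
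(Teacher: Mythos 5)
Your proposal is correct. Note that the paper itself gives no proof of this statement: it is quoted verbatim from West (Theorem 13.2.24 of the cited book), so there is nothing internal to compare against; your alteration argument is precisely the standard proof behind the cited result. The details check out: with $p=m^{-\alpha}n^{-\beta}$ the identities $\alpha+s\beta=1$ and $t\alpha+\beta=1$ (both immediate from the definitions of $\alpha,\beta$) give $mnp=m^snt p^{st}$ computed correctly as $m^{1-\alpha}n^{1-\beta}$ in both cases, so $\mathbb{E}[B]\le\frac{1}{s!\,t!}\,\mathbb{E}[W]$ exactly; the deletion of one $1$ per all-$1$ $s\times t$ submatrix cannot create new violations, and the passage from the expectation bound to a specific realization and then to the floor (an integer $\ge x$ is $\ge\lfloor x\rfloor$) is sound, as is your dismissal of the degenerate cases $st=1$, $m<s$, $n<t$.
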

We note in passing that the usual construction leading to this lower bound(s) is to use projective planes. In the special case of interest to us, Corollary~\ref{cor:we-discovered-Zarenkiewitz} together with Theorem~\ref{thm:west} yields: 
$$\mu(K_m\cp K_n) = z(m,n;2,2) \ge \left\lfloor \frac{3}{4} m^{2/3} n^{2/3} \right\rfloor\,.$$

\section{Triangle-free graphs $G$ with $\mu(G)=3$}
\label{sec:triangle-free}

In~\cite{DiStefano-2021+}, the following characterizations are proved:
\begin{itemize}
\item $\mu(G)=1$ if and only if $G\simeq K_1$; 
\item $\mu(G)=2$ if and only if $G\simeq P_n$, $n\ge 2$. 
\end{itemize}
In this section we supplement these results by characterizing triangle-free graphs $G$ with $\mu(G) = 3$. To this end, some preparation is needed. 
 
\begin{lemma}
\label{lem:isometric}
 If $G'$ is an isometric subgraph of $G$, then $\mu(G)\geq \mu(G')$.
\end{lemma}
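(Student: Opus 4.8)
The plan is to take a maximum mutual-visibility set $X'$ of the isometric subgraph $G'$ and show that the very same set $X'$, regarded as a subset of $V(G)$, is a mutual-visibility set of $G$. If this works, then $\mu(G) \ge |X'| = \mu(G')$, as required. The key observation is that "isometric" means distances are preserved, so a shortest path in $G'$ between two vertices of $X'$ is also a shortest path in $G$.

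Here is how I would carry it out. First, let $X' \subseteq V(G')$ be a mutual-visibility set of $G'$ with $|X'| = \mu(G')$. Fix two arbitrary distinct vertices $x, y \in X'$. Since $X'$ is a mutual-visibility set of $G'$, there is a shortest $x,y$-path $P$ in $G'$ with $V(P) \cap X' = \{x, y\}$. Now I claim $P$ is also a shortest $x,y$-path in $G$: indeed, its length equals $d_{G'}(x,y)$, which equals $d_G(x,y)$ because $G'$ is isometric in $G$; and $P$ is a walk in $G$ of exactly this length, hence a geodesic in $G$. Since $V(P) \subseteq V(G')$, we have $V(P) \cap X' = \{x,y\}$ still when viewed inside $G$ (the intersection cannot grow, as $X' \subseteq V(G')$ and all of $P$ lies in $V(G')$). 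Therefore $x$ and $y$ are $X'$-visible in $G$. As $x, y$ were arbitrary, $X'$ is a mutual-visibility set of $G$, giving $\mu(G) \ge |X'| = \mu(G')$.

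There is essentially no serious obstacle here; the only point that needs a moment's care is the direction of the distance equality and why a short walk is automatically a shortest path. Concretely: $P$ has length $d_{G'}(x,y) = d_G(x,y)$ by isometry, and any $x,y$-path in $G$ has length at least $d_G(x,y)$, so $P$ attains the minimum and is a geodesic in $G$. One might also note that the statement does \emph{not} require $G'$ to be convex — isometry alone suffices because we only need the \emph{existence} of one good geodesic, not control over all of them. (By contrast, a convexity hypothesis would be needed to transfer non-mutual-visibility or to bound $\mu(G)$ from above by $\mu(G')$ on a layer, as used elsewhere in the paper.) I would keep the write-up to a few lines along exactly these lines.
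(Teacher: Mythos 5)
Your proposal is correct and follows essentially the same route as the paper's proof: take a maximum mutual-visibility set of $G'$ and observe that, by isometry, the witnessing geodesics in $G'$ remain geodesics in $G$, so the same set works in $G$. The extra details you include (why the walk attains $d_G(x,y)$ and why the intersection with $X'$ does not grow) are fine but not a different argument.
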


\begin{proof}
Let $X$ be a mutual-visibility set of $G'$. For each $u,v\in X$ there is a shortest $u,v$-path $P$ in $G'$ such that $X\cap V(P)=\{u,v\}$. Since $P$ is a shortest $u,v$-path also in $G$, then $u,v$ are mutually visible also in $G$. Hence $X$ is also a mutual-visibility set of $G$ and thus $\mu(G)\geq \mu(G')$.
\qed
\end{proof}

Since a convex subgraph is an isometric, Lemma~\ref{lem:isometric} extends~\cite[Lemma~2.2]{DiStefano-2021+} which is stated for convex subgraphs. We also need the following fact based on $\Delta(G)$, the maximum degree of a vertex in a graph $G$. 

\begin{lemma}{\rm \cite[Lemma~2.6]{DiStefano-2021+}}
\label{lem:Delta-bound}
If $G$ is a graph, then $\mu(G) \ge \Delta(G)$. 
\end{lemma}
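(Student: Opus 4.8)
The plan is to exhibit an explicit mutual-visibility set of size $\Delta(G)$. First I would pick a vertex $v\in V(G)$ with $\deg(v)=\Delta(G)$ and take $X=N(v)$, the open neighbourhood of $v$, so that $|X|=\Delta(G)$ and, crucially, $v\notin X$.

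Next I would verify that $X$ is a mutual-visibility set. Let $u,w$ be two distinct vertices of $X$. If $uw\in E(G)$, then the one-edge path $u,w$ is a shortest $u,w$-path and $V(P)\cap X=\{u,w\}$, so $u$ and $w$ are $X$-visible. If $uw\notin E(G)$, then since $u$ and $w$ are both neighbours of $v$ we have $d_G(u,w)=2$, a distance witnessed by the path $P\colon u,v,w$; its unique internal vertex is $v$, which is not in $X$, so again $V(P)\cap X=\{u,w\}$ and $u,w$ are $X$-visible. Hence every pair of vertices of $X$ is mutually visible, $X$ is a mutual-visibility set, and therefore $\mu(G)\ge|X|=\Delta(G)$.

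The argument has essentially no obstacle. The only points that require care are that $v$ itself must be excluded from $X$ (otherwise it would obstruct the length-two paths between non-adjacent neighbours) and that one must separate the case of adjacent versus non-adjacent pairs inside $X$ in order to be sure that the chosen $u,w$-path is genuinely a geodesic. Since the statement is quoted from~\cite{DiStefano-2021+}, I would expect the authors either to omit the proof or to record exactly this one-line construction.
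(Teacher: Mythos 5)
Your proof is correct: taking $X=N(v)$ for a vertex $v$ of maximum degree, excluding $v$ itself, and splitting into the adjacent and non-adjacent cases is exactly the standard argument, and the paper itself gives no proof here, simply quoting the lemma from~\cite{DiStefano-2021+}. Nothing is missing.
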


A \emph{\frog graph} is obtained by composing a cycle $C$ with two paths 
$P_r$ and $P_s$, $r,s\ge 1$ (cf. Fig.~\ref{fig:frog_1}) as follows: given two antipodal vertices $v^r$ and $v^s$ of $C$, identify $v^r$ ($v^s$, respectively) with an external vertex of $P_r$ ($P_s$, respectively). See Fig.~\ref{fig:frog_1}. Vertices $v^r$ and $v^s$ are called \emph{junction vertices} of the \frog graph. Notice that each cycle graph is also a frog graph (it occurs when $r=s=1$).

\begin{figure}[ht!]
\begin{center}
\input{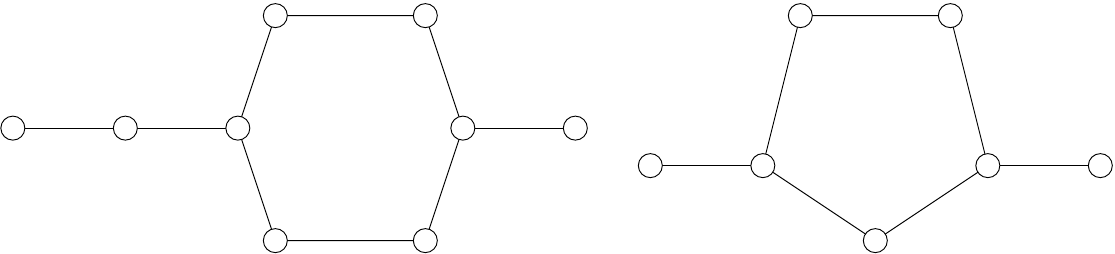_t}
\end{center}
\caption{%
Examples of frog graphs.
}
\label{fig:frog_1}
\end{figure}

The {\em graph $H$} is the graph obtained from the disjoint union of two paths on three vertices by adding an edge between the central vertices of the two paths, see Fig.~\ref{fig:H}. 

\begin{figure}[ht!]
\begin{center}
\input{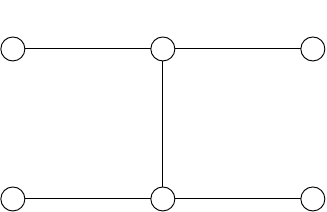_t}
\end{center}
\caption{\small
The graph $H$.
}
\label{fig:H}
\end{figure}

\begin{lemma}
\label{lem:h-graph}
If the graph $H$ is a subgraph of a triangle-free graph $G$, then $\mu(G)\geq 4$.
\end{lemma}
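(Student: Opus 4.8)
The plan is to exhibit an explicit mutual-visibility set of size $4$ inside $G$, using the four ``outer'' vertices of the copy of $H$, namely $x$, $w$, $y$, $z$ in the notation of Fig.~\ref{fig:H}, where the two central vertices $a$ (adjacent to $x,w$) and $b$ (adjacent to $y,z$) are joined by an edge $ab$. Set $P=\{x,w,y,z\}$. I would first record the relevant distances inside the copy of $H$: $d(x,w)=2$ via $a$, $d(y,z)=2$ via $b$, and $d(x,y)=d(x,z)=d(w,y)=d(w,z)=3$ via the path through $a$ and $b$. Because $G$ is triangle-free, $x$ and $w$ are non-adjacent (they have the common neighbor $a$), and likewise $y$ and $z$ are non-adjacent; also $x,w$ are non-adjacent to $y,z$ since otherwise we would get a triangle with $a$ or $b$ or a shorter cycle forcing adjacency issues — more precisely, an edge such as $xy$ together with $ax$, $ab$, $by$ would be fine, so I must instead argue directly from the distance values below rather than from non-adjacency alone.

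The core of the argument is to show that each of the six pairs from $P$ is $P$-visible in $G$. For the pair $\{x,w\}$: the path $x,a,w$ is a shortest path (length $2$, and $x\neq w$ are at distance $\ge 2$ since a common neighbor $a$ exists and $G$ is triangle-free so $xw\notin E(G)$), and its interior vertex $a$ is not in $P$; hence $x,w$ are $P$-visible. Symmetrically $y,z$ are $P$-visible via $y,b,z$. For a ``cross'' pair, say $\{x,y\}$: in $G$ we have $d(x,y)\le 3$ via the $H$-path $x,a,b,y$; I claim $d(x,y)\ge 3$, for otherwise $d(x,y)\in\{1,2\}$. If $d(x,y)=1$, then $x,y$ adjacent together with $a,b$ would still only give a $4$-cycle $x,a,b,y$, which is not a triangle, so triangle-freeness alone does not forbid it — this is the subtle point, see below. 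So suppose I have established (via the case analysis in the obstacle paragraph) that $d_G(x,y)=3$. Then $x,a,b,y$ is a shortest $x,y$-path whose interior vertices $a,b$ lie outside $P$, so $x,y$ are $P$-visible. The three remaining cross pairs $\{x,z\},\{w,y\},\{w,z\}$ are handled identically, each using the shortest path through $a$ and $b$.

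The main obstacle is precisely the claim that the distances between the ``outer'' vertices of the $H$-copy are not collapsed in the larger graph $G$: a priori $G$ could add edges making, say, $x$ adjacent to $y$, or making $x,w$ have another common neighbor that is itself in $P$ (impossible here since $P$ has no other vertices), or — the real worry — making $d_G(x,y)\le 2$ so that the path $x,a,b,y$ is no longer shortest and every shortest $x,y$-path might be forced to pass through $w$ or $z$. I would dispose of this by observing that even if $d_G(x,y)<3$, the vertices $w$ and $z$ cannot be forced onto every shortest $x,y$-path: if $d_G(x,y)=1$ then the single edge $xy$ is itself a $P$-visible path; if $d_G(x,y)=2$ then any shortest $x,y$-path has one interior vertex $c$, and I must show we can choose $c\notin P$. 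Since $w\neq y$ and $wy$ is not forced to be an edge, and similarly for $z$, one checks that at least one common neighbor of $x$ and $y$ is distinct from $w$ and $z$ — indeed if the only common neighbors of $x$ and $y$ were among $\{w,z\}$, then e.g. $xwy$ a path forces $d(x,w)=1$, contradicting $d_G(x,w)=2$ which itself must be re-examined. This bookkeeping — ruling out, for each of the six pairs, that both potential ``blockers'' from $P$ lie on every geodesic — is the one genuinely fiddly part; everything else is immediate from the definitions and from Lemma~\ref{lem:Delta-bound} is not even needed. I would organize it as a short case analysis on $d_G$ of each cross pair, noting that $d_G(x,y)\in\{2,3\}$ (it cannot be $1$ and remain triangle-free together with... actually it can, so: $d_G(x,y)\in\{1,2,3\}$), and in every case producing an explicit $P$-visible geodesic, which in the worst case is the direct edge or a two-edge path through a vertex outside $\{w,z\}$.
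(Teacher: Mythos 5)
Your proposal is correct and follows essentially the same route as the paper: take the four pendant vertices of the copy of $H$ as the candidate set, handle the two distance-$2$ pairs via the central vertices, and for each cross pair do a case analysis on $d_G\in\{1,2,3\}$, where the only nontrivial case $d_G=2$ is settled because triangle-freeness (via the centers $a$ and $b$) forbids the other two set members from being common neighbors of the pair. The hedge about ``re-examining'' $d_G(x,w)=2$ is unnecessary, since you had already established $xw\notin E(G)$ from the common neighbor $a$ and triangle-freeness, which is exactly the fact the paper uses.
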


\begin{proof}
Consider a subgraph $H$ of $G$ and let $X=\{x,y,w,z\}$ be the set containing the four pendant vertices of $H$ as described in Fig.~\ref{fig:H}. 

We claim that $X$ is a mutual-visibility set of $G$. As $G$ is triangle-free, the $x,y$-path of length $2$ in $H$ is a shortest path in $G$ and hence $x$ and $y$ are $X$-visible in $G$. Symmetrically, $w$ and $z$ are $X$-visible in $G$. It remains to show that each pair of vertices at distance $3$ in $H$ is $X$-visible in $G$. It suffices to consider the pair $x,z$ because the argument for the other three such pairs is analogous. Clearly, $d_G(x,z) \le d_H(x,z) = 3$. There is nothing to prove if $d_G(x,z) = 1$. Suppose  $d_G(x,z) = 2$. If $x$ and $z$ are not $X$-visible in $G$, each common neighbor of $x$ and $z$ must be from $\{y,w\}$,  but this is not possible as $G$ is triangle-free. Finally, if $d_G(x,z) = 3$, then $x$ and $z$ are clearly $X$-visible in $G$.  This proves the claim which in turn implies that $\mu(G)\ge 4$.
 \qed
\end{proof}

All is now ready for our main result. 

\begin{theorem}\label{thm:mu3}
Let $G$ be a connected, triangle-free graph. Then $\mu(G)=3$ if and only if $G$ is a tree with three leaves or a \frog graph.
\end{theorem}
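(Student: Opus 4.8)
The plan is to treat the two implications separately.

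\emph{The ``if'' direction.} If $G$ is a tree with exactly three leaves, then $\mu(G)=3$ is immediate from~\eqref{eq:trees}. So the substance is to show $\mu(F)=3$ for a \frog graph $F$ obtained from a cycle $C$ with antipodal vertices $v^r,v^s$ by attaching pendant paths $P_r$ at $v^r$ and $P_s$ at $v^s$. The bound $\mu(F)\ge 3$ follows from Lemma~\ref{lem:isometric}, since $C$ is an isometric (indeed convex) subgraph of $F$ and $\mu(C_k)\ge 3$ for every $k\ge 3$; equivalently one exhibits the set consisting of the two ``feet'' of $P_r,P_s$ (or of $v^r,v^s$ themselves when $F=C$) together with an interior vertex of one arc of $C$. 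For $\mu(F)\le 3$, take any $X\subseteq V(F)$ with $|X|\ge 4$ and force a contradiction. Each of $P_r,P_s$ is a convex path in $F$, and if $X$ contained two vertices of $V(P_r)\setminus\{v^r\}$, then the one closer to $v^r$ would lie on every geodesic from the other to any vertex outside $P_r$ (geodesics are forced out of the appendage through $v^r$); hence $X$ has at most one vertex in each of $V(P_r)\setminus\{v^r\}$ and $V(P_s)\setminus\{v^s\}$, and therefore at least two vertices on $C$. A short case analysis on how these cycle vertices sit relative to $v^r,v^s$ and to possible vertices of $X$ in the appendages then produces a non-visible pair: the key facts are that a geodesic joining the two appendages must traverse one of the two $v^r,v^s$-half-arcs of $C$, and that geodesics in a cycle are monotone arcs, so an intermediate vertex of an arc blocks visibility. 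Antipodality of $v^r,v^s$ is precisely what makes the two half-arcs equal and leaves no alternative routing; this is where that hypothesis is used.

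\emph{The ``only if'' direction.} Let $G$ be connected and triangle-free with $\mu(G)=3$. Then $G$ is neither $K_1$ nor a path, and if $G$ is a tree then~\eqref{eq:trees} gives $|L(G)|=3$, as wanted. So assume $G$ contains a cycle; we must show that $G$ is a \frog graph. Two facts hold at once: $\Delta(G)\le 3$ by Lemma~\ref{lem:Delta-bound}; and no two vertices of degree $3$ are adjacent, for with triangle-freeness such a pair would yield the graph $H$ as a subgraph (the six vertices are forced distinct, a common neighbour creating a triangle), so $\mu(G)\ge 4$ by Lemma~\ref{lem:h-graph}. In particular every neighbour of a degree-$3$ vertex has degree at most $2$.

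From here I would establish three structural claims, each time by exhibiting a mutual-visibility set of size $4$ or an isometric subgraph with $\mu\ge 4$ (invoking Lemma~\ref{lem:isometric}). \textbf{(A)} $G$ is unicyclic: passing to the $2$-core $G_0$, which is isometric in $G$, if $G$ were not unicyclic then $G_0$ is a triangle-free subdivision of a connected cubic multigraph with no two adjacent degree-$3$ vertices, and one checks that every such graph has $\mu\ge 4$ by choosing four vertices adjacent to branch vertices along three edge-disjoint links (theta graphs $\Theta_{a,b,c}$ with $a\ge 2$, ``dumbbell'' graphs, and so on). \textbf{(B)} Writing $G$ as the cycle $C$ with (pendant, singly-attached) trees hanging off its vertices, each such tree is a path: a hanging tree with a branch vertex, taken together with the two $C$-neighbours of its attachment vertex, is an isometric subtree of $G$ with four leaves. \textbf{(C)} There are at most two hanging paths, and if there are two then their attachment vertices $v^1,v^2$ are antipodal on $C$: a third attachment point forces a non-antipodal pair, and if $v^1,v^2$ are non-antipodal (and, by the above, non-adjacent), then the two feet of the pendant paths together with the two vertices of $C$ adjacent to $v^1,v^2$ on the longer $v^1,v^2$-arc form a mutual-visibility set of size $4$. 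Claims (A)--(C) together say exactly that $G$ is a \frog graph.

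\emph{Main obstacle.} I expect the hardest part to be (A): after reducing to the $2$-core one must produce a size-$4$ mutual-visibility set in an arbitrary triangle-free subdivided cubic multigraph with no two adjacent degree-$3$ vertices, and the naive ``neighbours of branch vertices'' choice must be adjusted case by case when two of the three links joining a pair of branch vertices are short (so that a required geodesic is forced through a short link already met by the chosen set). Claim (C) and the upper bound in the ``if'' direction are of the same flavour but smaller in scope. The common engine throughout is the observation that a geodesic entering a pendant appendage, or a maximal path of degree-$2$ vertices strung between two branch vertices, must traverse it completely, so that an interior vertex of it destroys visibility; this is what drives every ``bad configuration $\Rightarrow\mu\ge 4$'' step.
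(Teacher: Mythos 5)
Your overall skeleton is sound and essentially parallels the paper: $\Delta(G)\le 3$ via Lemma~\ref{lem:Delta-bound}, the tree case via~\eqref{eq:trees}, the unicyclic case reduced to pendant paths with antipodal attachment points (your claims (B) and (C) are fine and match the paper's single-cycle analysis), the shared-edge configuration killed by Lemma~\ref{lem:h-graph}, and the remaining multi-cycle configurations killed by exhibiting a $4$-element mutual-visibility set. The sketched upper bound in the ``if'' direction is also acceptable and close to the paper's argument.

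The genuine gap is your claim (A), and you flag it yourself. Saying that a triangle-free subdivided cubic multigraph with no two adjacent branch vertices ``has $\mu\ge 4$, one checks'' is precisely the theorem's core difficulty, not a routine verification, and your fallback remark that the naive ``neighbours of branch vertices'' choice ``must be adjusted case by case when links are short'' is exactly the unresolved point. Two things are missing. First, an explicit placement of four vertices that works for \emph{all} admissible link lengths in the theta configuration (two cycles sharing a path of length at least $2$): the paper takes the three $u,v$-paths of lengths $n\le m\le l$, puts $x$ adjacent to $u$ on the shortest path, and places $z$ on $P_l$ at distance $d$ from $u$ with $l-n\le 2d\le l-n+2$ and $y$ on $P_m$ at distance $d'$ with $m-n\le 2d'\le m+n-4$, then verifies a list of triangle-type inequalities so that $\{v,x,y,z\}$ is mutually visible; nothing in your plan produces these constraints. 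Second, and more importantly, visibility must be certified in $G$, not merely in the chosen theta subgraph, which in general is \emph{not} isometric in $G$ (or in the $2$-core), so Lemma~\ref{lem:isometric} cannot be invoked for it; the paper handles this by choosing the two cycles with $n(C')+n(C'')$ minimum and then ruling out shortcut paths $P_t$ joining $y$ and $z$ outside $F=G[V(C')\cup V(C'')]$ through a second use of that minimality (cases (a)--(d) of its proof). Your reduction to the $2$-core avoids pendant trees but does not address these external shortcut geodesics at all, and ``theta graphs, dumbbell graphs, and so on'' hides an unbounded family of $2$-cores with many branch vertices from which a suitable, controllably embedded pair of cycles still has to be extracted. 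Until the placement constraints and the shortcut-path argument (or substitutes for them) are supplied, the ``only if'' direction is not proved.
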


\begin{proof}
($\Leftarrow$) If $G$ is a tree with three leaves, then $\mu(G)=3$ as proved in~\cite[Corollary 4.3]{DiStefano-2021+}. If $G$ is a \frog graph, then let $C$ be its cycle, $P_r$ and $P_s$ its attached paths, and $v^r,v^s$ the corresponding junction vertices. Since $C$ is a convex subgraph of $G$ (and hence also isometric), Lemma~\ref{lem:isometric} gives $\mu(G)\ge \mu(C)$. This implies $\mu(G)\geq 3$ since $\mu(C)=3$ for any cycle (cf.~\cite[Lemma 2.8]{DiStefano-2021+}). Assume now that there exists in $G$ a mutual-visibility set $X$ with $|X|\ge 4$. Notice first that $|P_r\cap X|\le 1$ and $|P_s\cap X|\le 1$ must hold. Indeed, if, say, $P_r\cap X = \{u_1,u_2\}$ and $u_3\in (P_s\cup C)\cap X$, then either $u_3$ and $u_1$ or $u_3$ and $u_2$ are not $X$-visible. Notice also that $X$ cannot be completely contained in $C$. If $P_r\cap X = \{u_1\}$, let $u_2,u_3\in X$ be the vertices of $C$ closest to $v^r$. Consider the two $v^r,v^s$-paths of $C$: $u_2$ and $u_3$ must belong to the same path, otherwise $u_1$ and any additional vertex $u_4\in X$ are not $X$-visible. But, if $u_2$ and $u_3$ belong to the same path, then since $v^r$ and $v^s$ are antipodal, $u_1,u_2,u_3$ are not pairwise $X$-visible.  Then $\mu(G)\le 3$.
 
($\Rightarrow$) Since $\mu(G) = 3$, Lemma~\ref{lem:Delta-bound} implies that $\Delta(G)\le 3$. If $\Delta(G) = 2$, then $G$ cannot be a path as we would have $\mu(G)\le 2$. 
In the rest we may assume that $\Delta(G) = 3$. We analyze different cases according to the number of cycles in $G$.

\smallskip
Assume that $G$ has no cycles. Then $G$ is a tree with $\Delta(G) = 3$ and $\mu(G) = 3$. Moreover, $G$ must have exactly three leaves, for otherwise $\mu(G)\ge 4$ would hold.  
 
\smallskip
Assume that $G$ has a single cycle $C$. We show that $G$ is a \frog graph. Recall that $\Delta(G) = 3$ and consider first the case in which $G$ contains two vertices $u,v$ with degree $3$ and at least one of them (say $u$) is not on $C$. 
In case $v$ is on $C$, assume without loss of generality that $v$ is the vertex of $C$ with degree 3 closest to $u$.
Then, the two neighbors of $u$ and the two neighbors of $v$ which do not lie on a shortest $u,v$-path, form a mutual-visibility set of $G$, a contradiction with $\mu(G)=3$. Hence, if in $G$ there are vertices with degree $3$, they all are on $C$. Consider the case in which there are at least three  such vertices. Then at least two of them (say $u,v$) are not antipodal and hence there exists a mutual-visibility set of $G$ containing the four vertices adjacent to $u$ and $v$ not belonging to the shortest $u,v$-path. Again a contradiction with $\mu(G)=3$. Two cases remain: (1) if $C$ has a single vertex $u$ such that $\deg(u)= 3$, then $G$ is a \frog graph; (2) if $C$ has two vertices $u,v$ with degree $3$, they must be antipodal (otherwise we get the previous contradiction) and hence $G$ is again a \frog graph. 

\smallskip
Assume finally that $G$ has more than one cycle. To complete the proof of the theorem we are going to show that $\mu(G)\geq 4$. In the first subcase suppose that $G$ contains two vertex-disjoint cycles $C'$ and $C''$. Let $u'\in V(C')$ and $u''\in V(C'')$ be selected such that $d_G(u',u'')$ is smallest possible. Then, having in mind that $G$ is triangle-free, the four vertices in $(N(u')\cap C')\cup(N(u'') \cap C'')$ are pairwise mutual-visible. Observe next that if $C'$ and $C''$ are cycles of $G$, then $|V(C')\cap V(C'')| = 1$ is not possible because $\Delta(G) = 3$. If two cycles share one single edge $uv$, then $G$ contains $H$ and Lemma~\ref{lem:h-graph} yields $\mu(G)\ge 4$. 

It remains to analyze the case in which  each pair of cycles of $G$ shares more than one edge. Select a pair of cycles $C'$, $C''$ such that $n(C') + n(C'')$ is as small as possible, and assume without loss of generality that $n(C')\leq n(C'')$. Then, both $C'$ and $C''$ are isometric subgraphs of $G$.  According to the minimality of $n(C') + n(C'')$, it follows that $C'\cap C'' = P_n$, where $n\ge 3$ by the case assumption. 

Consider the subgraph $F=G[V(C')\cup V(C'')]$ and let $u,v$ be the end-vertices of $P_n$. Then $\deg_F(u) = \deg_F(v) = 3$ and $u$ and $v$ are connected by three paths in $F$. We may without loss of generality assume that $P_n$ is a shortest among these three $u,v$-paths. Let $P_l$ and $P_m$ be the other two paths such that $l\geq m\geq n$. Then $C'=G[V(P_n)\cup V(P_m)]$ and $C''=G[V(P_n)\cup V(P_l)]$. 

Let $x,y,z$ be three vertices such that:
\begin{itemize}
\item
$P_l= (u, \ldots, z, \ldots, v)$, where
$z$ divides $P_l$ in the $u,z$-subpath of length $d\ge 1$ and the $z,v$-subpath of length $l-d-1$; 
\item
$P_n= (u, x, \ldots, v)$, where
$x$ divides $P_n$ in the $u,x$-subpath of length $1$ and the $x,v$-subpath of length $n-2$; 
\item
$P_m= (u, \ldots, y, \ldots, v)$, where
$y$ divides $P_m$ in the $u,y$-subpath of length $d'\ge 1$ and the $y,v$-subpath of length $m-d'-1$. 
\end{itemize}

\begin{figure}[ht!]
\begin{center}
\input{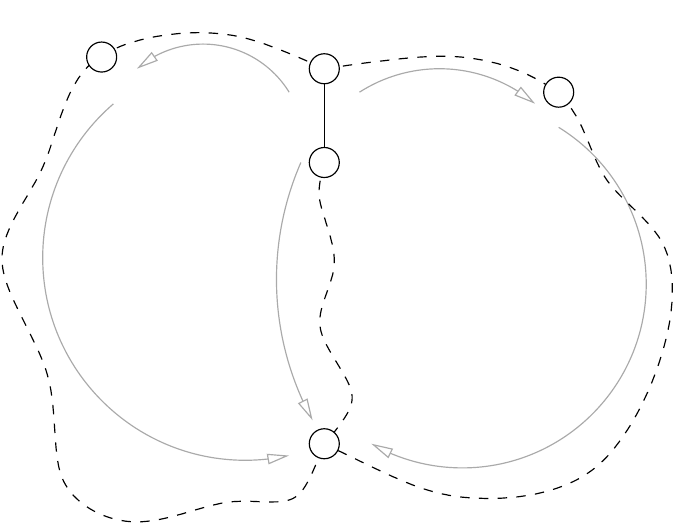_t}
\end{center}
\caption{\small
The subgraph $F$ as defined in the proof of Theorem~\ref{thm:mu3}. The solid line represents an edge, the dashed lines represent paths, and the gray arrows show the lengths of some subpaths.
}
\label{fig:mu3}
\end{figure}

We set now a condition for the position of $z$ on $P_l$ such that $x,v,z$ are mutually visible in $C''$. To this aim, the following conditions must hold:
\begin{itemize}
\item  $(l-1-d) \le (1+d)+ (n-2)$;
\item  $(1+d)   \le (l-1-d) + (n-2)$;       
\item  $(n-2)   \le (l-1-d) + (1+d)$.
\end{itemize}
The last condition corresponds to $l \ge n-2$, which holds by hypothesis. The first two conditions yield the following constraints about $d$:
\begin{equation}\label{eq:2d}
l-n \le 2d \le l+n-4\,.
\end{equation}
This implies that when the length $d$ of the $u,z$-subpath fulfills \eqref{eq:2d}, $x,v,z$ are mutually visible in $C''$. Analogously in $C'$, by letting $d'$ fulfilling the following constraints,
\begin{equation}\label{eq:2d'}
m-n \le 2d' \le m+n-4,
\end{equation}
then we get that $x,v,y$ are mutually visible in $C'$.


\smallskip
We now check whether \eqref{eq:2d} and~\eqref{eq:2d'} are sufficient to have that $X=\{v,x,y,z\}$ is a mutually visible set of $F$. To ensure this property, we need to prove the following: (i) $y$ does not obstruct the visibility of $z$ and $v$; (ii) $z$ does not obstruct the visibility of $y$ and $v$; 
(iii) both $x$ and $v$ do not obstruct the visibity of $y$ and $z$, that is, the $y,z$-path passing through $u$ and with length $d+d'$ is a shortest path in $F$.

Concerning proving (i), it corresponds to show that $(l-1-d) \leq d + d' + (m-1-d')$. The latter is equivalent to $2d \ge l-m$, which is guaranteed by~\eqref{eq:2d}. Similarly, (ii) corresponds to show that $ (m-1-d') \leq d + d' + (l-1-d)$; this is equivalent to $2d'\ge m-l$ which trivially holds since $l\ge m\ge n$.

Concerning proving (iii), it corresponds to show that all the following relationships hold:
\begin{itemize}
\item  $d+d' \le (m-1-d') + (n-2) + (1+d)$. 
This relationship holds by~\eqref{eq:2d'}.
\item  $d+d' \le (d'+1) + (n-2) + (l-1-d)$. 
This relationship holds by~\eqref{eq:2d}.
\item  $d+d' \le (m-1-d') + (l-1-d)$. 
It corresponds to $2d+2d' \le m +l -2 = (m+n-4) + (l-n+2)$. Since $2d' \le  m+n-4$ by~\eqref{eq:2d'}, if we overwrite~\eqref{eq:2d} by further imposing
\begin{equation}\label{eq:2d-new}
l-n \le 2d \le l-n+2\,,
\end{equation}
then we finally get that $X=\{v,x,y,z\}$ is a mutually visible set of $F$. 
\end{itemize}
In the rest of the proof we select $d$ and $d'$ such that they fulfill~\eqref{eq:2d'} and~\eqref{eq:2d-new}, respectively, and fix them. In this way the vertices $z$ and $y$ are uniquely defined.

\smallskip
Assume now that $X$ is not a mutual-visibility set of $G$. The only possibility for this is due to some path $P_t$ that connects a vertex in $P_l$ to a vertex in $P_m$. It can be easily observed that this occurs when $P_t$ connects $z$ and $y$ and two vertices in $X$ are not mutually visible, that is when one of the following pairs is not mutually visible:
\begin{itemize}
\item[$(a)$]  $y$ and $v$ (that is, $z$ obstructs the visibility of $y$ and $v$);
\item[$(b)$]  $z$ and $v$ (that is, $y$ obstructs the visibility of $z$ and $v$);
\item[$(c)$]  $z$ and $x$ (that is, $y$ obstructs the visibility of $z$ and $x$); 
\item[$(d)$]  $y$ and $x$ (that is, $z$ obstructs the visibility of $y$ and $x$).
\end{itemize}
Assume that $P_t$ has no vertex in common with the $y,z$-path passing on $u$ in $F$. We now prove that, in each of the above cases, the cycle $C'''$ formed by $P_t$, by the $y,u$-subpath of $P_m$ of length $d'$, and by the $u,z$-subpath of $P_l$ of length $d$ has order less than $n(C'')$, that is
$ n(C''')= (t-1) + d + d' < n(C'')= l+n-2 < l+n $,
which is a contradiction with the minimality of $n(C') + n(C'')$. Notice that the latter is equivalent to 
\begin{equation}\label{eq:t2}
t <  l+n - d - d' + 1.
\end{equation}


Consider the case $(a)$, in which $z$ obstructs the visibility of $y$ and $v$. In this case, the length $t-1$ of the $y,z$-path $P_t$ must guarantee $(t-1) + (l-1-d) < m-1-d'$, that is:
\begin{equation}\label{eq:t1}
t < m +d -d' - l + 1.
\end{equation}
According to~\eqref{eq:t1}, to prove~\eqref{eq:t2} it is enough to show that the following holds:
\[ l+n - d - d' + 1 \ge m +d -d' - l + 1. \]
Notice that this inequality is equivalent to $2d \le n+2l-m$,
which trivially holds by using~\eqref{eq:2d-new}. 

Consider $(b)$. In this case, the length $t-1$ of the $y,z$-path $P_t$ must guarantee $(t-1) + (m-1-d')  < l-1-d$. By using the same approach as in case $(a)$, this relationship still leads to prove that $n(C''') < n(C'')$. In fact, it is equivalent to showing that $2d' \le m+n$, which is trivially implied by~\eqref{eq:2d'}.

Consider $(c)$. In this case, the length $t-1$ of the $y,z$-path $P_t$ must guarantee $(t-1) + d' +1  < d+1$. Proving $n(C''') < n(C'')$ is equivalent to showing that $2d \le l+n$, which is trivially implied by~\eqref{eq:2d-new}.

Consider $(d)$. In this case, the length $t-1$ of the $y,z$-path $P_t$ must guarantee $(t-1) + d +1  < d'+1$. Proving $n(C''') < n(C'')$ is  equivalent to showing that $2d' \le l+n$, which is implied by~\eqref{eq:2d'} and by the assumption $l\ge m$.

This proves the claim that $X$ is a mutual-visibility set of $G$ when $P_t$ has no vertex in common with the $y,z$-path $P$ passing on $u$ in $F$. Assume now that $P_t$ shares some vertex with such a path $P$. Since $P_t$ connects $y$ and $z$, it must form at least a cycle with $P$ of length less than $n(C''')$. As above, this contradicts the minimality of $n(C') + n(C'')$.  
\qed
\end{proof}

\section{Concluding remarks}

As a continuation of the present investigation, it would in particular be desirable to characterize the trees for which the equality holds in Corollary~\ref{cor:trees-lower}, and to characterize all graphs $G$ with $\mu(G)=3$. 

As pointed out in the introduction, mutual-visibility sets are conceptually similar to general position sets. Hence it would be interesting to investigate graphs $G$ with $\gp(G) = \mu(G)$, where $\gp(G)$ is the general position number of $G$. For instance, the equality holds in geodetic graphs, that is, graphs with the property that each pair of vertices is connected by a unique shortest path, cf.~\cite{mao-1999, parthasarathy-1984, stemple-1968}. 

Note that the Petersen graph is geodetic. In this respect it would be of interest to consider 
$\mu(G)$ for graph $G$ with $\diam(G) = 2$.

\end{document}